\let\originalleft\left
\let\originalright\right
\renewcommand{\left}{\mathopen{}\mathclose\bgroup\originalleft}
\renewcommand{\right}{\aftergroup\egroup\originalright}
\newcommand{\bE}{\mathbb{E}}
\newcommand{\cN}{\mathcal{N}}
\newcommand{\ust}{^{\star}}
\newcommand{\bR}{\mathbb{R}}
\newcommand{\cI}{\mathcal{I}}
\newcommand{\bP}{\mathbb{P}}
\newcommand{\cT}{\mathcal{T}}
\newcommand{\cB}{\mathcal{B}}
\newcommand{\cL}{\mathcal{L}}
\newcommand{\D}{\Delta}
\newcommand{\Te}{\Tilde{e}}
\newcommand{\TD}{\Tilde{\Delta}}
\newcommand{\Td}{\Tilde{d}}
\newcommand{\Tc}{\Tilde{c}}
\newcommand{\Tph}{\Tilde{\phi}}
\newcommand{\Tu}{\Tilde{u}}
\newcommand{\TV}{\Tilde{V}}
\newcommand{\TQ}{\Tilde{Q}}
\newcommand{\Tp}{\Tilde{p}}
\newcommand{\itg}{\int_{\bR_+}}
\newcommand{\vph}{\varphi(\TD_+,a\TD)}
\newcommand{\cQ}{(1-c) e^{\g(\lambda + \D^2)}}
\newcommand{\gl}{ce^{\g\lambda}}
\newcommand{\gd}{e^{\g\D^2}}
\newcommand{\gTd}{e^{\g\TD^2}}
\newcommand{\sumc}{\sum_{c_+ \in \{0,1\}}}
\newcommand{\sumTc}{\sum_{\Tc_+ \in \{0,1\}}}
\newcommand{\g}{\gamma}
\DeclareMathOperator*{\argmin}{arg\,min}
\newcommand{\nal}[1]{\begin{align*}#1\end{align*}}
\newcommand{\al}[1]{\begin{align}#1\end{align}}
\newtheorem{definition}{Definition}
\newtheorem{theorem}{Theorem}
\newtheorem{proposition}{Proposition}
\newtheorem{corollary}{Corollary}
\newtheorem{lemma}{Lemma}
\newif\ifuseRomanappendices
\def\BibTeX{{\rm B\kern-.05em{\sc i\kern-.025em b}\kern-.08em
    T\kern-.1667em\lower.7ex\hbox{E}\kern-.125emX}}
\title{\LARGE \bf
 Optimal Risk-Sensitive Scheduling Policies for Remote Estimation of Autoregressive Markov Processes
}
\author{Manali Dutta and Rahul Singh
\thanks{The authors are with the Department of Electrical and Communication Engineering,
Indian Institute of Science, Bengaluru, Karnataka 560012, India (e-mail: manalidutta@iisc.ac.in and rahulsingh@iisc.ac.in)}%
}
\begin{document}

\maketitle

\begin{abstract}
We design scheduling policies that minimize a risk-sensitive cost criterion for a remote estimation setup.
~Since risk-sensitive cost objective takes into account not just the mean value of the cost, but also higher order moments of its probability distribution, the resulting policy is robust to changes in the underlying system's parameters.~The setup consists of a sensor that observes a discrete-time autoregressive Markov process, and at each time $t$ decides whether or not to transmit its observations to a remote estimator using an unreliable wireless communication channel after encoding these observations into data packets.~We model the communication channel as a Gilbert-Elliott channel~\cite{laourine2010betting,chakravorty2019remote,10384144} to take into account the temporal correlations in its fading. Sensor probes the channel~\cite{laourine2010betting} and hence knows the channel state at each time $t$ before making scheduling decision.~The scheduler has to minimize the expected value of the exponential of the finite horizon cumulative cost that is sum of the following two quantities (i) the cumulative transmission power consumed, (ii) the cumulative squared estimator error. We pose this dynamic optimization problem as a Markov decision process (MDP), in which the system state at time $t$ is composed of (i) the instantaneous error $\Delta(t):= x(t)-a\hat{x}(t-1)$, where $x(t),\hat{x}(t-1)$ are the system state and the estimate at time $t,t-1$ respectively, and (ii) the channel state $c(t)$. We show that there exists an optimal policy that has a threshold structure, i.e., at each time $t$, for each possible channel state $c$, there is a threshold $\D\ust(c)$ such that if the current channel state is $c$, then it transmits only when the error $\D(t)$ exceeds $\D\ust(c)$. Our analysis proceeds by constructing a certain ``folded MDP''~\cite{chakravorty2018sufficient} that is much more amenable to analysis than the original MDP. We show structural results for this folded MDP, and finally unfold this to obtain the structural result for the original MDP.
\end{abstract}
\begin{keywords}
Remote state estimation, risk-sensitive cost, Gilbert-Elliott channel, Markov decision process (MDP), threshold-type policy.
\end{keywords}

\section{Introduction}
We design risk-sensitive~\cite{jacobson1973optimal,whittle1990risk} optimal cheduling policies which solve the problem faced by a sensor in a remote state estimation setup~\cite{ren2017infinite,chakravorty2019remote,dutta2023optimal}. More specifically, the networked control system (NCS) of interest consists of an autoregressive Markov process that is observed by a sensor. Sensor encodes these observations into data packets, and then transmits them to a remote estimator over an unreliable wireless communication channel.~This remote estimator is spatially distributed from the source and the sensor, and estimates the state of the underlying source process. Packet transmission attempts consume energy.

The problem faced by the sensor while making scheduling decisions is described as follows. If it continually transmits packets, then this ensures high quality estimates of the process at the remote estimator. However, this strategy is not energy efficient since packet transmissions consume energy. On the other hand, if it does not transmit packets for long time durations in order to save energy, then the quality of the estimates is degraded.~In order to strike a balance between minimizing the estimation error, and keeping the power consumption low, the sensor implements scheduling policies that make transmission decisions dynamically, based on the information currently available with it.

Our focus in this work is on minimizing the following risk-sensitive exponential cost criteria over a finite time horizon $\bE \exp \gamma \left( \sum_{t=0}^{T-1} (x(t)-\hat{x}(t))^2 + \lambda u(t) \right) $, where $x(t),\hat{x}(t)\in\bR$ are the process state and its estimate at time $t$ respectively, while $u(t)$ is the decision variable which indicates whether ($u(t)=1$) or not ($u(t)=0$) a packet is attempted for transmission at time $t$. $\lambda>0$ is the unit price per transmission, while $\gamma>0$ is the risk-sensitivity parameter~\cite{bacsar2021robust}. Next, we give a brief overview of risk-sensitive control, and also discuss the utility of scheduling policies that minimize such an objective. 
\subsection{Risk-Sensitive Scheduling}
Consider a dynamical system that operates for $T$ steps. If $d(t)$ is the cost incurred by it at time $t$, then the corresponding \textit{risk-sensitive cost} is given as follows,
\al{\label{rs-obj}
\frac{1}{\gamma} \ln {\bE[e^{\gamma\sum_{t=0}^{T-1} d(t)}] }, 
}
where $\gamma > 0$ is called the risk-sensitivity parameter, and expectation is taken with respect to the underlying probability measure. As compared with the corresponding \textit{risk-neutral} cost $\bE\left[\sum_{t=0}^{T-1} d(t)\right]$, we note that while the risk-neutral cost penalizes only the mean of the cumulative cost, the risk-sensitive cost also takes into consideration all the higher order moments of the cost. Indeed, the Taylor series expansion for~\eqref{rs-obj} for small values of $\gamma$ around $0$, can be approximated as follows~\cite{bacsar2021robust},
\al{
\bE\left[ \sum_{t=0}^{T-1} d(t)\right] + \frac{\gamma}{2} Var\left[\sum_{t=0}^{T-1} d(t)\right]  + O(\gamma^2),\label{tayexp}
}
where for a random variable $X$, $Var[X]$ represents its variance.
~Hence, a control policy that optimizes this risk-sensitive cost, is robust to variations in the system parameters, possibly induced by an adversary~\cite{bacsar2021robust}. Consequently, a scheduling policy that makes decisions regarding packet transmissions by optimizing the risk-sensitive criteria, is averse to uncertainties and undesirable variations in the system. 

Risk-sensitive cost optimization setup is more general than the classical risk-neutral optimization. Indeed, we note from~\eqref{tayexp} that in the limit $\gamma \rightarrow 0 $, the objective~\eqref{rs-obj} reduces to the risk-neutral cost $\bE\left[\sum_{t=0}^{T-1} d(t)\right]$, so that we recover the classical risk-neutral stochastic controls from the risk-averse formulation~\cite{whittle1990risk}. Robustness of risk-sensitive controls, and its connection with the robust control \slash $H_\infty$ control~\cite{francis1987linear} are well-known by now~\cite{whittle1990risk},~\cite{dupuis2000robust},~\cite{whittle2002risk}. The goal of a robust controller is to deal with model uncertainties.~\cite{glover1988state} is one of the first works that establish a link between risk-sensitive control and robust control.~Subsequently, extensive research efforts have been directed towards finding connections between these two fields~\cite{fleming1995risk, khalil1996robust, fleming1997risk, fleming1999risk}. Additionally, it has been shown that as $\gamma \rightarrow \infty$, the risk-sensitive objective approaches the minimax objective~\cite{coraluppi1999risk}.~In a minimax optimization problem, the quality of a solution is judged by its performance in the worst possible scenario.~Thus, the connection between the risk-sensitive objective and the minimax objective suggests that a controller obtained by optimizing the risk-sensitive cost with the risk-sensitivity parameter set at a high value, is risk-averse, and hence exhibits a higher tolerance for uncertainties in the system as compared with a risk neutral optimal controller.

In summary, we are motivated to design policies for NCS by optimizing risk-sensitive objective since 
it takes into account not just the mean value of the cost, but also its higher order moments. This will ensure that the system is robust to unpredictable changes. This is important since a major concern for NCS is their susceptibility to cyber attacks~\cite{cardenas2009challenges},~\cite{fawzi2014secure}.~This arises mainly due to their openness to the digital world which poses significant security challenges. For example, cyber attacks may lead to packet losses in a wireless communication channel~\cite{shu2014privacy},~\cite{cetinkaya2016networked}, false data injection~\cite{li2018false},~\cite{sargolzaei2019detection}, and introduction of delays into signals used in NCS~\cite{victorio2021secure},~\cite{sargolzaei2022secure}. Hence, to protect the network against such malicious attacks, the risk-sensitive cost criterion serves as a beneficial framework~\cite{befekadu2011risk,befekadu2015risk,singh2017risk}.~Despite this, there has been a limited work on designing such policies for NCS.  
We now discuss prior works on risk-sensitive control and remote estimation problem. Remote state estimation problem is a central topic in the field of NCS~\cite{hespanha2007survey}.
\subsection{Literature Review}
\emph{Risk-Sensitive control of MDPs}: The study of risk-sensitive control of Markov Decision Processes (MDPs) was initiated in~\cite{howard1972risk}. It studied discrete time MDPs that have finite state and action spaces. Since then, there have been numerous works that address various aspects of risk-sensitive control for various types of processes. Further details of these works can be found in~\cite{biswas2023ergodic},~\cite{bauerle2023markov}. 

\emph{Risk-Sensitive control of linear systems}: 
The work on risk-sensitive control of Linear Quadratic Gaussian (LQG) systems~\cite{kumar2015stochastic} was initiated in~\cite{jacobson1973optimal}. It considers linear systems driven by white Gaussian noise in which the performance cost is quadratic in system state and controls.~An important finding is that unlike the risk-neutral control problem, the optimal controller is now also a function of the variance of the Gaussian noise. Since then, several works have studied various aspects of risk-sensitive controls for LQG systems, more details can be found in~\cite{whittle1990risk}.

\emph{Optimal policies for remote estimation in NCS}: 
We now describe works that address various issues faced while optimizing the performance in a remote estimation setup.~Consider a process which is modeled as a linear system driven by Gaussian noise, and an estimator that is located at a different location is tasked with generating its estimates. Packet transmissions consume energy, and there is a sensor that has to dynamically choose when to transmit packets.~The design problems associated with such a setup can be broadly categorized into the following three types: (i) optimizing the estimator for a given scheduler, (ii) for a given estimator, optimizing the scheduling decisions regarding when to transmit packets, and (iii) designing jointly optimal scheduler and estimator. We now discuss works that solve problems (i), (ii), and (iii) in the context of both risk-neutral and risk-sensitive objective. We firstly describe works that study (i)-(iii) for the classical risk-neutral objective, which is then followed by a discussion on its risk-sensitive counterpart. 

\textit{Risk-neutral objective:} For problem (i), Kalman filter~\cite{kalman1960new} serves as the backbone for deriving optimal estimator or minimizing the mean square error in the risk-neutral case.~\cite{sinopoli2004kalman} shows that Kalman filter is optimal when there are intermittent observations due to packet losses suffered while communicating packets from sensor to estimator over wireless networks.~The work~\cite{han2015stochastic} considers a vector source process, and derives optimal estimators for two different classes of scheduling policies, both of which are of ``threshold-type.'' The first class of policies transmit only when a function of the current state observation exceeds a certain threshold, while the second class of policies transmit only when a function of the current measurement innovation, i.e. the difference between the current measurement and its \textit{a priori} estimate, exceeds a certain threshold.~It is then shown that in both the cases, the optimal remote estimator satisfies Kalman update equations, with a modified Kalman gain. Several variants of Kalman filter have been proposed as optimal estimators in order to compensate for delays and packet losses occurring in wireless communication networks~\cite{schenato2007foundations,schenato2008optimal,li2016wireless}. 

We now discuss works that address the issue (ii) mentioned above for risk-neutral objective. The works~\cite{chakravorty2018sufficient,leong2018transmission,wu2019learning,dutta2023optimal} solve (ii) under a broad range of assumptions on the wireless communication channel. The estimator is Kalman-like, i.e., the estimator updates its plant state estimate with the received update upon successful delivery of a packet from the sensor, otherwise it estimates the plant state based on the current information available to it.  It is then shown that there exists an optimal scheduling policy that has a threshold structure.~\cite{chakravorty2018sufficient} allows the transmitter to transmit at various power levels. Packet losses are i.i.d., and the packet loss probability is a known function of the transmission power.~It is then shown that there exists an optimal scheduling policy that has a threshold structure with respect to the current error, i.e., the difference between the current state value and its \textit{a priori} estimate.~\cite{leong2018transmission} assumes i.i.d. packet losses with known loss probability, and then derives optimal scheduling policy when the sensor has constraints on its average energy consumption. Optimal policy is shown to have a threshold structure with respect to the variance of the difference between the current state of the process, and its estimate.~\cite{wu2019learning} also considers an i.i.d. loss model, but assumes that the packet loss probability is unknown. It shows that the optimal policy has a threshold structure with respect to the time elapsed since the last successful transmission.~\cite{dutta2023optimal} models the wireless communication channel as a Gilbert-Elliott channel~\cite{laourine2010betting}, and assumes that the channel state is not known to the sensor. It shows that there exists an optimal scheduling policy that exhibits a threshold structure with respect to the current belief state of the channel state, i.e., the sensor transmits only when the conditional probability that the channel is good, exceeds a certain threshold which is a function of the current value of the error.~Several works~\cite{chakravorty2016remote,ren2017infinite,chakravorty2019remote} consider the problem of designing jointly optimal estimator and scheduler, i.e., the problem (iii) stated above. It is shown in these works that under various assumptions on the channel model, there exists a policy that has a threshold structure with respect to the error, and a Kalman-like estimator, that are jointly optimal.~\cite{chakravorty2016remote} assumes that the packet losses in the wireless channel are i.i.d across times. Both~\cite{ren2017infinite} and~\cite{chakravorty2019remote} model the state of the wireless channel as a Markov process. While~\cite{ren2017infinite} assumes that the sensor knows the channel state instantaneously,~\cite{chakravorty2019remote} assumes its knowledge with a delay of one unit.

\textit{Risk-sensitive objective:} The pioneering work~\cite{speyer1992optimal} considers the problem of designing an estimator that minimizes the risk-sensitive cost associated with the cumulative estimation error, and shows that when there are continual transmissions of observations without any packet losses, then the optimal estimator is a linear filter.\cite{huang2019robust} fixes the scheduling policy to be of threshold-type with respect to a function of the current value of the sensor's measurement of the source process, and shows that the optimal estimator has a ``Kalman-like'' structure, i.e., the aprior and posterior state estimates evolve in a recursive manner similar to the Kalman filter, but with a modified gain, and coefficients that depend upon the risk-sensitivity parameter. 
To the best of our knowledge, there are no existing works that explore the design of an optimal scheduling policy for the sensor, or jointly optimal transmission policy for the sensor and estimator in the context of risk-sensitive cost.
\subsection{Contributions}
The current work designs risk-sensitive scheduling policies for a remote estimator in which a sensor transmits observations of a discrete-time autoregressive (AR) process over a fading wireless channel that is modeled as a Gilbert-Elliott channel~\cite{laourine2010betting,chakravorty2019remote}. This type of channel model is more realistic as compared to an i.i.d. packet drop model~\cite{laourine2010betting}, since it is able to describe the temporal correlations in wireless channel properties. Gilbert-Elliott channel can also be used to model burst-noise channels, where multiple consecutive packets may be lost due to channel fading or interference~\cite{gilbert1960capacity}. The system operating cost considered is the sum of the cumulative transmission power, and the estimation error incurred over a finite horizon.

As is discussed next, minimizing the risk-sensitive objective is much more challenging than the risk-neutral case. We list two major challenges:

C1) A popular approach to solve risk-neutral infinite horizon undiscounted MDPs is the vanishing discount approach~\cite{hernandez2012discrete}. One considers a sequence of discounted MDPs with discount factor converging to $1$, and recovers an optimal policy for the undiscounted problem in the limit the discount factor approaches unity. The success of this approach hinges on the fact that the discounted risk-neutral MDPs admit an optimal stationary policy. However, infinite horizon discounted risk-sensitive MDPs, in general, might not admit a stationary optimal policy~\cite{rojas1999controlled},~\cite{di1999risk}.~This is in sharp contrast with the case of risk-neutral MDPs~\cite{hernandez2012discrete}. Consequently, one cannot employ the vanishing discount approach, that has been used extensively in order to solve the risk-neutral average cost criteria, in order to solve the risk-sensitive MDPs~\cite{rojas1999controlled}.

C2) Since the risk-sensitive cost criterion is multiplicative in nature, the cost at the current time is a function of the history till that time~\cite{speyer1992optimal}. As a result, the linearity property of expectation which can be easily used in additive cost, cannot be directly applied in the risk-sensitive criteria as is shown in~\cite{speyer1992optimal} which considers the problem of deriving an optimal estimator. This makes the analysis more difficult since now we have to consider the entire history leading up to the current time.

Our contributions are as follows:

(1) To the best of our knowledge, ours is the first work to study the problem of designing risk-sensitive scheduling policy for a remote state estimation setup. As an initial attempt to address C1), we consider minimizing the expected value of the exponential of the cumulative cost incurred during a finite time horizon that is a weighted sum of the cumulative transmission power, and the cumulative squared estimation error. We pose this dynamic optimization problem as a MDP in Section~\ref{sec:MDP formulation}, in which the system state comprises of the error $x(t)-a\hat{x}(t-1)$, and the current state of the wireless channel. 

(2) In contrast to the risk-neutral case~\cite{hernandez2012discrete} where the Bellman equation is additive, in the risk-sensitive cost it is multiplicative~\cite{bauerle2014more}. We show in Section~\ref{val-iter} that our model satisfies certain technical assumptions~\cite{bauerle2014more}, and hence we can use the value iteration algorithm to solve the MDP. Moreover, we show the existence of an optimal deterministic Markov policy, i.e., it makes decisions only on the basis of the current state and time. This addresses C2). This is because, at the current time step, it now suffices to store only the previous time step state information and ignore the history. This also reduces the computational complexity and memory requirements on the policy.

(3) The analysis of the MDP is complicated by the fact that the error term $\Delta(t)$~\eqref{delta}, which is part of the system state, assumes both negative and non-negative values. We instead analyze a certain ``folded MDP'' which was introduced in~\cite{chakravorty2018sufficient}, and this significantly simplifies the analysis since in the folded MDP the error assumes only non-negative values. 

(4) In Section~\ref{sec:structural results}, we establish a novel structural result for the optimal scheduling policy that minimizes the risk-sensitive cost criterion. Specifically, we show that there exists an optimal scheduling policy that exhibits a threshold structure with respect to the error, i.e., for each value of the channel state $c$, there exists a threshold such that the sensor transmits only when the magnitude of the current error exceeds this threshold. Such a structure reduces the policy search space and is easy to implement.

\textit{Notation}: Let $\bR, \bR_+, \bR_-$ denote the set of real numbers, non-negative and negative real numbers, respectively. $\bP(\cdot)$, $\bE(\cdot)$ denote the probability of an event and expectation of a random variable respectively.~$\mathcal{N}(\mu,\sigma^2)$ denotes the Gaussian distribution with mean $\mu$ and variance $\sigma^2$, and $\delta_x(\cdot)$ denotes the delta function with unit mass at $x$.

\section{Problem Formulation\label{problem_formulation}}
We introduce the remote state estimation setup in Section~\ref{system-model}, and then formulate the optimal scheduling problem based on a risk-sensitive cost criterion in Section~\ref{risk-sensitive cost}.

\subsection{System Model\label{system-model}}
Consider a remote state estimation setup as shown in Fig.~\ref{fig:setup} that consists of a sensor which observes a discrete-time AR Markov process $\{x(t)\}_{t =0}^{T}$. The state of the process evolves as follows,
\al{\label{source}
x(t+1)=ax(t) +w(t),~t=0,1,2,\ldots, T-1,
}
where the initial state is $x(0) \sim \cN(0,1)$, $a, x(t) \in \bR$, and $w(t)$ is an i.i.d. Gaussian noise process that satisfies $w(t) \sim \cN(0,\sigma^2)$. The sensor encodes its observations into data packets before transmitting them to the remote estimator. At each time $t \in \{0,1,\ldots,T\}$, the sensor has to decide on whether ($u(t)=1$) or not ($u(t)=0$) to attempt a packet transmission. We assume that each transmission attempt incurs $\lambda$ units of energy, where $\lambda > 0$. Packets are transmitted over an unreliable wireless communication channel.~The state of the channel at time $t$ is denoted by $c(t) \in \{0,1\}$.~$c(t)=0$ represents that the channel is in bad state at time $t$, and hence any transmission attempt at time $t$ by the sensor is unsuccessful. $c(t)=1$ denotes that the channel is in a good state, so that any packet which is attempted at time $t$ is successfully delivered to the remote estimator. We model the channel state process $\{c(t)\}_{t =0}^{T}$ as a two-state Markov process. This is popularly known as the Gilbert-Elliott channel~\cite{chakravorty2019remote,10384144}. Such a channel has memory, and can be used to model the temporal correlations in a wireless channel, in contrast to a channel modeled with i.i.d. packet drops. This allows for a more realistic representation of the wireless channel. We let $p_{01}$ be the probability with which channel state at next time step is $1$ given that currently it is in state $0$, and similarly let $p_{10}$ be the probability with which it is $0$ at next step, when currently it is in state $1$.

\begin{figure}[t]
    \begin{tikzpicture}[scale=0.75]
        \draw (0, 0) rectangle (2, 1.2);
        \node[align=center, font=\footnotesize] at (1, 0.6) {AR process};
        \draw[->] (2, 0.6) -- (3, 0.6) node[midway, above, font=\footnotesize]{$x(t)$};
        \draw (3, 0) rectangle (4.3, 1.2);
        \node[align=center, font=\footnotesize] at (3.65, 0.6) {Sensor};
        
        \draw[-] (4.3,0.6) -- (4.8,0.6);
        \draw[-] (4.8,0.6) -- (5.2,0.9);
        \draw[->] (5.05, 0.6) -- (5.8, 0.6);
        \draw[dashed, ->,dash pattern=on 2pt off 2pt] (4.9,0.85) arc (90:-40:0.3);
        \node[font=\small] at (5.4, 0.17) {$u(t)$};
        \draw (5.8, 0) rectangle (8, 1.2);
        \node[align=center, font=\footnotesize] at (6.9, 0.6) {Gilbert-Elliott \\ channel $c(t)$};
        \draw[->] (8, 0.6) -- (8.9, 0.6);
        \draw (8.9, 0) rectangle (10.5, 1.2);
        \node[align=center, font=\footnotesize] at (9.7, 0.6) {Estimator};
        \draw[->] (10.5, 0.6) -- (11.3, 0.6) node[midway, above, font=\footnotesize]{$\hat{x}(t)$};
    
    \begin{scope}[->,>=stealth,shorten >=0.5pt,node distance=1.2cm,on grid,auto,state/.style={circle,draw,minimum size=0.2cm}]
        \node[state,font=\scriptsize] (0) at (6.1,2.1) {0};
        \node[state, right=of 0,font=\scriptsize] (1) {1};
        
        \path (0) edge [bend left] node[font=\scriptsize] {$p_{01}$} (1)
              (1) edge [bend left] node[font=\scriptsize] {$p_{10}$} (0);
        \draw[->] (0) to [out=215, in=140, looseness=4] node[pos=0.75, above,font=\scriptsize] {$1 - p_{01}$} (0);
        \draw[->] (1) to [out=45, in=-35, looseness=4] node[pos=0.3, above,font=\scriptsize] {$1-p_{10}$} (1);
        \end{scope}
    \end{tikzpicture}
    \caption{Remote state estimation setup. Source process evolves as $x(t+1) = a x(t) + w(t)$, where $x(t),a \in \bR$, the noise $w(t) \sim \cN(0,1)$, decision variable $u(t) \in \{0,1\}$, channel state $c(t) \in \{0,1\}$, and estimator state $\hat{x}(t) \in \bR$.}
    \label{fig:setup}\vspace{-1em}
\end{figure}
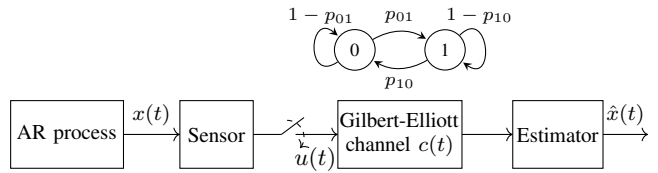
We assume that the channel state is known instantaneously at the sensir.~This is possible since the sensor probes the channel, for example, by sending a probing packet at each time $t$~\cite{laourine2010betting}.
~Let $\hat{x}(t)$ be the state of the estimator at time $t$. The estimate $\hat{x}(t)$ evolves as follows, for $t=1,2,\ldots,T$, we have,
\al{\label{x-hat}
\hat{x}(t) =
\begin{cases}
    a\hat{x}(t-1) &\mbox{ if } u(t)c(t)=0,\\
    x(t) &\mbox{ if } u(t)c(t) =1,
\end{cases}
}
where $\hat{x}(0) =0$. The information available with the sensor at time $t$ is given by,
\al{\label{info-t}
\cI(t):= \Big(\{x(s),c(s)\}_{s=0}^{t},\{u(s)\}_{s=0}^{t-1}\Big).
}
The scheduler at the sensor makes decision $u(t)$ at time $t$ as a function of the information available to it till time $t$, i.e.,
\al{\label{u(t)}
u(t) = \phi_t(\cI(t)),
}
where $\phi_t: \cI(t) \rightarrow u(t)$ is a measurable function, and $u(t) \in \{0,1\}$. The collection $\phi := (\phi_0, \phi_1, \ldots, \phi_{T})$ is a scheduling policy that makes decisions regarding packet transmissions. 
\subsection{Risk-Sensitive Cost\label{risk-sensitive cost}}
Define the following cost function,
\al{\label{instan-cost}
g(x, \hat{x}, u) := \lambda u + (x-\hat{x})^2.
}
Then, the instantaneous cost incurred at time $t$ is $g(x(t),\hat{x}(t),u(t))$.~It is sum of two terms (i) transmission energy $\lambda \cdot u$, and (ii) the squared estimation error $(x-\hat{x})^2$.

We are interested in solving the following finite-horizon risk-sensitive dynamic optimization problem~\cite{bauerle2014more} for the model described in Section~\ref{system-model},
\al{
	\label{obj-fin-hor}
\min_{\phi} \frac{1}{\gamma} \log\bE_{\phi}[e^{\gamma \sum_{t=0}^{T} g(x(t),\hat{x}(t),u(t))}],
}
where $\phi$ is a scheduling policy, $\gamma > 0$ is the risk-sensitivity parameter, $\bE_{\phi}$ denotes the expectation taken w.r.t. the measure induced by policy $\phi$, and $g(x,\hat{x},u)$ is given by~\eqref{instan-cost}.~Since $\log$ is a strictly increasing function,~\eqref{obj-fin-hor} can equivalently be stated as follows,
\al{\label{obj-fin-hor-w/o-log}
\min_{\phi} \bE_{\phi}[e^{\gamma \sum_{t=0}^{T} g(x(t),\hat{x}(t),u(t))}], 
}
where the cost function $g$ is as in~\eqref{instan-cost}. 
\section{MDP formulation}\label{sec:MDP formulation}
We now formulate the problem~\eqref{obj-fin-hor-w/o-log} as a MDP.~Section~\ref{val-iter} discusses how to use the value iteration algorithm to solve this MDP.~Section~\ref{folded MDP} then constructs a certain ``folded MDP'' to simplify its analysis.

Consider the following error process $\{\Delta(t)\}_{t = 0}^{T}$,
\al{\Delta(t):=x(t)-a\hat{x}(t-1),\label{delta}}
where we let $\Delta(0) = 0$. From~\eqref{x-hat} we have that the evolution of $\{\Delta(t)\}$ is given as follows,
\al{\label{delta-recur}
\Delta(t+1) =
\begin{cases}
    a\Delta(t)+w(t) &\mbox{ if } u(t)c(t) = 0,\\
    w(t) &\mbox{ if } u(t)c(t) = 1.
\end{cases}
}
After performing some algebraic manipulations, we have that the instantaneous cost~\eqref{instan-cost} can equivalently be written in terms of $(\Delta,c,u)$ instead of $(x,\hat{x},u)$ as follows, 
\al{\label{new-instan-cost}
d(\D,c,u) := \lambda u + (1-uc)\D^2.
}
Instead of solving~\eqref{obj-fin-hor-w/o-log}, we now consider the following equivalent problem,
\al{
	\label{obj-fin-hor-w/o-log_1}
	\min_{\phi} \bE_{\phi}[e^{\gamma \sum_{t=0}^{T} d(\Delta(t),c(t),u(t))}],
	}
where $\gamma > 0$ is the risk-sensitivity parameter, $\bE_{\phi}$ denotes the expectation taken w.r.t. the measure induced by policy $\phi$, and $\D(t)$ is given by~\eqref{delta}.~We now show that~\eqref{obj-fin-hor-w/o-log_1} can be formulated as a MDP in which the state at time $t$ is given by $(\D(t),c(t))$ and control $u(t) \in \{0,1\}$.
\begin{lemma}
\label{lemma:u=f(del,c)}
    For the purpose of solving~\eqref{obj-fin-hor-w/o-log_1}, there is no loss of optimality in restricting the class of scheduling policies in~\eqref{u(t)} to those which have the following form,
    \al{
    u(t) = \phi_t(\Delta(t),c(t)).\label{u=f(del,c)}
    }
\end{lemma}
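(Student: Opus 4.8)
The plan is to show that the pair $(\Delta(t),c(t))$ together with the control $u(t)$ constitutes a controlled Markov state for the cost-accumulation process in \eqref{obj-fin-hor-w/o-log_1}, so that a standard information-state / dynamic-programming argument applies. First I would observe that the running cost $d(\Delta(t),c(t),u(t))$ in \eqref{new-instan-cost} is a function only of $(\Delta(t),c(t),u(t))$, and that by \eqref{delta-recur} and the Gilbert--Elliott transition kernel the conditional law of $(\Delta(t+1),c(t+1))$ given the full history $\cI(t)$ and the action $u(t)$ depends on that history only through $(\Delta(t),c(t),u(t))$: indeed $w(t)$ is i.i.d.\ $\cN(0,\sigma^2)$ and independent of $\cI(t)$, and $c(t+1)$ is conditionally independent of everything else given $c(t)$ via $p_{01},p_{10}$. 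Hence $\{(\Delta(t),c(t))\}$ is a controlled Markov chain with per-stage cost $d$.

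Next I would run the argument via the multiplicative (risk-sensitive) Bellman recursion. Define, for a policy $\phi$, the cost-to-go
\al{\label{ctg}
J^{\phi}_{t}(\delta,c) := \bE_{\phi}\!\left[ e^{\gamma \sum_{s=t}^{T} d(\Delta(s),c(s),u(s))} \,\Big|\, \Delta(t)=\delta,\, c(t)=c \right],
}
and let $V_t(\delta,c) := \inf_{\phi} J^{\phi}_t(\delta,c)$. Working backward from $t=T$, at each step one writes $J^{\phi}_t = e^{\gamma d(\delta,c,u(t))}\,\bE_{\phi}[\,J^{\phi}_{t+1}(\Delta(t+1),c(t+1))\mid \Delta(t)=\delta,c(t)=c,u(t)\,]$, pulling the current cost factor out of the expectation because it is measurable with respect to $(\Delta(t),c(t),u(t))$. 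Since the conditional expectation of $J^{\phi}_{t+1}$ depends on the past only through $(\delta,c,u(t))$ and (by the induction hypothesis) $V_{t+1}$ is achieved by a Markov policy, the infimum at stage $t$ is attained by choosing $u(t)$ as a function of $(\delta,c)$ alone, namely any minimizer of $e^{\gamma d(\delta,c,u)}\sum_{c'}p_{cc'}\int V_{t+1}(\delta',c')\,\mathcal{N}(\mathrm{d}\delta';\mu_u,\sigma^2)$, where $\mu_u$ is $a\delta$ or $0$ according to \eqref{delta-recur}. This gives the Bellman equation and exhibits an optimal Markov policy of the form \eqref{u=f(del,c)}, so restricting to such policies incurs no loss of optimality.

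The main obstacle is the measurability/well-posedness layer that the risk-sensitive setting makes slightly delicate: one must justify interchanging infimum and expectation (a measurable-selection argument for the $\argmin$ over the two-point action set --- here trivial since $u\in\{0,1\}$), and one must ensure the value functions $V_t$ are finite and measurable so the recursion closes. For this I would invoke the technical conditions verified in Section~\ref{val-iter} (the paper states the model satisfies the assumptions of \cite{bauerle2014more} and that value iteration applies, yielding an optimal deterministic Markov policy); Lemma~\ref{lemma:u=f(del,c)} is essentially the statement that the minimizing policy produced by that value iteration depends on the state only through $(\Delta(t),c(t))$, which is immediate once $(\Delta(t),c(t))$ is identified as the controlled Markov state. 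A secondary point worth a sentence is that $\Delta(t)$ is indeed computable by the sensor from $\cI(t)$ via \eqref{delta}, so a policy of the form \eqref{u=f(del,c)} is a legitimate special case of \eqref{u(t)}; thus the restriction is genuine and not vacuous.
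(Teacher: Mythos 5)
Your proposal is correct and follows essentially the same route as the paper: you identify $(\Delta(t),c(t))$ as an information state by checking that the transition law and the per-stage cost depend on the history only through $(\Delta(t),c(t),u(t))$, and then conclude via the standard dynamic-programming argument that Markov policies suffice. The only difference is that you spell out the multiplicative backward recursion and the measurability issues explicitly, whereas the paper delegates these to citations (Kumar--Varaiya Ch.~6 and the conditions verified in Section~\ref{val-iter}); this is a harmless elaboration, not a different argument.
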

\begin{proof}
    The proof proceeds by showing that the process $\{\D(t),c(t)\}_{t =0}^{T}$ is a Markov Decision Process (MDP) with control $u(t)$. For this, we will show that $(\D(t),c(t))$ is an information state~\cite{kumar2015stochastic} at the sensor, i.e., for $\cI(t)$, $d(\D,c,u)$ given by~\eqref{info-t} and~\eqref{new-instan-cost} respectively,
    \al{
    \text{(i)} \quad &\bP(\D(t+1),c(t+1) \mid \cI(t), u(t))\notag\\
    & =\bP(\Delta(t+1), c(t+1) \mid \D(t),c(t),u(t)),\notag\\
    \text{(ii)} \quad &\bE[d(\D(t),c(t),u(t)) \mid \cI(t),u(t)] \notag\\
    & = \bE[d(\D(t),c(t), u(t)) \mid \D(t),c(t),u(t)].\notag
    }
    First, we consider (i).
    \al{
    & \bP(\Delta(t+1),c(t+1) \mid \cI(t), u(t))\notag\\
    & = \bP(\D(t+1) \mid \cI(t),c(t+1) , u(t))\bP(c(t+1) \mid \cI(t), u(t)) \notag\\
    & = \bP(\D(t+1) \mid \D(t), u(t)) \bP(c(t+1) \mid c(t))\notag\\
    & =\bP(\Delta(t+1),c(t+1) \mid \Delta(t),c(t), u(t)),\notag
    }
    where second equality follows from~\eqref{delta}, and the Markovian nature of $\D$~\eqref{delta-recur} and the channel state. Hence, (i) is true.
    
    Next, (ii) follows since the instantaneous cost~\eqref{new-instan-cost} is a function of $(\D,c)$ and $u$.
    Thus, from~\cite[Ch. 6]{kumar2015stochastic} we have that $(\D(t), c(t))$ is an information state, and the optimization problem~\eqref{obj-fin-hor-w/o-log},\eqref{new-instan-cost} is a MDP with state $(\D(t),c(t)) \in \bR \times \{0,1\}$ and control $u(t) \in \{0,1\}$. This proves the Lemma.
\end{proof}

We now describe the controlled transition probabilities associated with~\eqref{obj-fin-hor-w/o-log_1}. Let $p(\D_+,c_+ \mid \D,c;u)$ denote the transition density function from the current state $(\D,c)$ to the next state $(\D_+,c_+)$ under action $u$. Consider the following two possibilities for $u$:


(i) $u=0:$ Then the corresponding transition density is,
\al{
& p(\D_+,c_+ \mid \D,c;0) \notag\\
& = p_{c0} e^{-\frac{(\D_+-a\D)^2}{2\sigma^2}} \delta_{0}(c_+) + p_{c1} e^{-\frac{(\D_+-a\D)^2}{2\sigma^2}} \delta_{1}(c_+).\label{df-0}
}

(ii) $u=1:$ Then the corresponding transition density is,
\al{
& p(\D_+,c_+ \mid \D,c;1) \notag\\
& = c\left[p_{c0} e^{-\frac{\D_+^2}{2\sigma^2}} \delta_{0}(c_+) + p_{c1} e^{-\frac{\D_+^2}{2\sigma^2}} \delta_{1}(c_+)\right] + (1-c)\notag\\
& \times\left[p_{c0} e^{-\frac{(\D_+-a\D)^2}{2\sigma^2}} \delta_{0}(c_+) + p_{c1} e^{-\frac{(\D_+-a\D)^2}{2\sigma^2}} \delta_{1}(c_+)\right],\label{df-1}
}

\subsection{Value Iteration}\label{val-iter}
We now show that we can use value iteration algorithm to solve~\eqref{obj-fin-hor-w/o-log_1}. Since we are dealing with a risk-sensitive cost objective, we firstly need to verify whether our MDP satisfies certain conditions~\cite[pp. 107-108]{bauerle2014more}. This is done next. We start with some definitions.
\begin{definition}[Transistion law]
    Let $\cL$ denote the Lebesgue measure on $\bR$. The controlled transition law denoted by $\{P(\cdot \mid \D, c, u)\}$ describes the transition probabilities for each $(\D,c,u) \in \bR \times \{0,1\}, \times \{0,1\}$, and has a density $p(\cdot \mid \D, c; u)$~\eqref{df-0}-\eqref{df-1} with respect to $\cL$~\cite[Example C.6]{hernandez2012discrete}, i.e., for any Borel measurable subset $\cB$ of $\bR$,
    \al{
        & P((\D_+,c_+) \in \cB \times \{0,1\} \mid \D, c, u)\notag\\
        & =\sum_{c_+ \in \{0,1\}} \int_{\cB} p(\D_+,c_+ \mid \D, c; u) d \cL(\D_+).\label{trans-law}
        }
\end{definition}\vspace{1em}

\begin{definition}[Weakly and strongly continuous]
	The transition law $\{P(\cdot \mid \D, c, u)\}$ is said to be
    
    (i) \emph{weakly continuous}, if for each $(\D,c,u) \in \bR \times \{0,1\}, \times \{0,1\}$, and continuous and bounded function $w: \bR \times \{0,1\} \rightarrow \bR$, the function $w':\bR \times \{0,1\} \times \{0,1\} \rightarrow \bR$ is continuous, where $w'(\D,c,u)= \bE[w \mid \D,c,u]$,

    (ii) \emph{strongly continuous}, if for each $(\D,c,u) \in \bR \times \{0,1\}, \times \{0,1\}$, and measurable bounded function $w: \bR \times \{0,1\} \rightarrow \bR$, the function $w':\bR \times \{0,1\} \times \{0,1\} \rightarrow \bR$ is continuous and bounded, where $w'(\D,c,u)= \bE[w \mid \D,c,u]$.
\end{definition}
\begin{lemma}\label{lemma:VI-conditions}
    MDP~\eqref{obj-fin-hor-w/o-log_1} satisfies the following properties:
    \begin{itemize}
        \item[{P1)}] The risk-sensitive criterion is continuous and strictly increasing in $\bR_+$.
        \item[{P2)}]  The action set is compact for all $(\D,c) \in \bR \times \{0,1\}$.
        \item[{P3)}] The function $(\D,c) \mapsto u$ is upper semicontinuous\footnote{A function $v: \bR \times \{0,1\} \rightarrow u$ is upper semicontinuous if its superlevel sets $\{(\D,c) \in \bR \times \{0,1\} \mid v(\D,c) \geq u'\}$ with $u' \in \{0,1\}$ are closed in $\bR \times \{0,1\}$.}.
        \item[{P4)}] The instantaneous cost is such that $(\D,c,u) \mapsto d(\D,c,u)$ is lower semicontinuous\footnote{A function $v$ is lower semicontinuous if $-v$ is upper semicontinuous.}.
        \item[{P5)}] The transition law $\{P(\cdot \mid \D, c, u)\}$ is weakly continuous for each $(\D,c,u) \in \bR \times \{0,1\}$. 
    \end{itemize}
\end{lemma}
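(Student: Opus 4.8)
\emph{Proof plan.} The plan is to verify P1)--P5) one at a time; the first four are immediate from the structure of the model, and the only property that needs a genuine argument is the weak continuity in P5).

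For P1), the risk-sensitive objective~\eqref{obj-fin-hor-w/o-log_1} is built from the exponential utility $x\mapsto e^{\gamma x}$ with $\gamma>0$, which is continuous and strictly increasing on $\bR_+$ (note the cumulative cost is always non-negative since $d(\D,c,u)=\lambda u+(1-uc)\D^2\ge 0$). For P2), the action set is $\{0,1\}$ at every state $(\D,c)$, a finite hence compact set. For P3), the feasible-action correspondence $(\D,c)\mapsto\{0,1\}$ is constant and compact-valued, hence trivially upper semicontinuous; equivalently, each superlevel set in the footnote is either all of $\bR\times\{0,1\}$ or empty, so closed. For P4), I would put the discrete topology on the finite factors $\{0,1\}$; then for each of the four pairs $(c,u)\in\{0,1\}^2$ the map $\D\mapsto\lambda u+(1-uc)\D^2$ is a polynomial in $\D$, so $d$ is jointly continuous on $\bR\times\{0,1\}\times\{0,1\}$, and in particular lower semicontinuous.

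The substantive step is P5). Fix a bounded continuous $w:\bR\times\{0,1\}\to\bR$ with $|w|\le M$. Since $\{0,1\}$ is discrete, it suffices to show that for each fixed $(c,u)\in\{0,1\}^2$ the map $\D\mapsto w'(\D,c,u):=\bE[w(\D_+,c_+)\mid\D,c,u]$ is continuous on $\bR$. Writing $\kappa_\sigma(y):=e^{-y^2/(2\sigma^2)}$ for the Gaussian kernel appearing in~\eqref{df-0}--\eqref{df-1} and making the change of variables $y=\D_+-a\D$, the $u=0$ case becomes $w'(\D,c,0)=\sum_{c_+\in\{0,1\}}p_{cc_+}\int_{\bR}w(y+a\D,c_+)\,\kappa_\sigma(y)\,dy$, while by~\eqref{df-1} the $u=1$ case is the convex combination, with weights $c$ and $1-c$, of a $\D$-independent constant and an integral of exactly the same form. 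For $\D_n\to\D$, continuity of $w$ gives $w(y+a\D_n,c_+)\to w(y+a\D,c_+)$ for every $y$, and $|w(y+a\D_n,c_+)\,\kappa_\sigma(y)|\le M\,\kappa_\sigma(y)$, which is integrable and independent of $n$; dominated convergence then yields $w'(\D_n,c,u)\to w'(\D,c,u)$, i.e., the transition law is weakly continuous.

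I do not expect any real obstacle: analysis enters only in P5), and there the sole point that needs care is to recentre the Gaussian kernel (via $y=\D_+-a\D$) before passing to the limit, so that the dominating function $M\,\kappa_\sigma$ does not depend on $\D$; after that it is a one-line dominated-convergence argument. Assembling the five verifications proves the lemma.
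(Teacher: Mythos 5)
Your proposal is correct. For P1)--P4) it coincides with the paper's proof (both reduce these to the exponential form of the criterion, finiteness of the action set, and continuity of $\D \mapsto \lambda u + (1-uc)\D^2$). The only substantive item, P5), is where you diverge: the paper establishes the \emph{stronger} property of strong continuity of the transition law by appealing to the fact that $P$ is given by densities \eqref{df-0}--\eqref{df-1} that depend continuously on $(\D,c,u)$ (citing Example C.6 of Hern\'andez-Lerma and Lasserre), and then invokes ``strong continuity $\Rightarrow$ weak continuity.'' You instead prove weak continuity directly: fix a bounded continuous test function $w$, recentre the Gaussian kernel via $y = \D_+ - a\D$ so that the dominating function $M e^{-y^2/(2\sigma^2)}$ is independent of $\D$, and pass to the limit by dominated convergence. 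Your route is self-contained and elementary, and it correctly isolates the one point that needs care (moving the $\D$-dependence out of the kernel and into the argument of $w$ before dominating); the trade-off is that your argument genuinely uses continuity of $w$ and therefore yields only weak continuity, whereas the paper's citation-based argument delivers strong continuity (valid for merely measurable bounded $w$, via $L^1$-continuity of the densities) at no extra cost. Since P5) only asks for weak continuity, both arguments suffice for the lemma.
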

\begin{proof}
    {P1)} follows since we have an exponential risk criterion.\par
        {P2)} and {P3)} follow since the action set is finite in our case, i.e., for every state $(\D,c) \in \bR \times \{0,1\}, u \in \{0,1\}$.\par
        {P4)} The instantaneous cost $d$~\eqref{new-instan-cost} is continuous in $\bR$ and hence, lower semicontinuous.\par
        {P5)} We show that $P$ is strongly continuous. The result then follows because strong continuity implies weak continuity~\cite[Definition C.3]{hernandez2012discrete}. We have for any Borel measurable subset $\cB$ of $\bR$,
        \al{
        & P((\D_+,c_+) \in \cB \times \{0,1\} \mid \D, c, u)\notag\\
        & =\sum_{c_+ \in \{0,1\}} \int_{\cB} p(\D_+,c_+ \mid \D, c; u)\, d\D_+, \notag
        }
        where first equality follows from~\eqref{trans-law} and because $d\mu(\D_+) = d\D_+$.\par
        Then, $P$ is strongly continuous from the definition of $p$~\eqref{df-0}-\eqref{df-1}~\cite[Example C.6]{hernandez2012discrete}. This completes the proof.
\end{proof}
The above result allows us to use the value iteration algorithm to solve~\eqref{obj-fin-hor-w/o-log_1}. For $(\D,c) \in \bR \times \{0,1\}$, define, 
\al{\label{valfunc-fin-hor}
& V(\D,c) := \min_{\phi} J_T(\D,c;\phi),
}
where for a policy $\phi$ we define,
\al{
J_T(\D,c;\phi) := \bE_{\phi}[e^{\gamma \sum_{t=0}^{T} d(\D(t),c(t),u(t))}].
}
Let $V_t$ be the iterate at stage $t$ of the value iteration algorithm. The next result follows from~\cite[Theorem 1, Corollary 1]{bauerle2014more} upon letting $U(y):=e^{\gamma y}$. It describes the value iteration algorithm for obtaining $V$, and also yields an optimal policy $\phi\ust$. 
\begin{proposition}\label{prop:valit-fin-hor}
    Consider the MDP~\eqref{obj-fin-hor-w/o-log_1} with transition density function $p$~\eqref{df-0}-\eqref{df-1}. Then,
    \begin{itemize}
        \item[a)] The iterates $V_t, t = 0,1,\ldots, T$ associated with the value iteration algorithm are generated as follows: for each $(\D, c) \in \bR \times \{0,1\}$, we have,
        \al{\label{Vn-fin-hor}
        V_{t+1}(\D,c) = \min_{u \in \{0,1\}} Q_{t+1}(\D,c ; u),
        }
        where for $u=0$,
        \al{
        & Q_{t+1}(\D,c;0) = e^{\g \D^2} \sum_{c_+ \in \{0,1\}} p_{cc_+}\notag\\
        & \times \int_{\bR} e^{-\frac{(\D_+ -a\D)^2}{2\sigma^2}} V_t(\D_+,c_+)\, d\D_+,\label{Qn0-fin-hor}
        }
        and for $u=1$,
        \al{
        & Q_{t+1}(\D,c;1) = (1-c) e^{\g(\lambda + \D^2)} \sum_{c_+ \in \{0,1\}} p_{cc_+}\notag\\
        & \times  \int_{\bR} e^{-\frac{(\D_+ -a\D)^2}{2\sigma^2}} V_t(\D_+,c_+)\, d\D_+ + c e^{\g \lambda}\notag\\
        & \times \sum_{c_+ \in \{0,1\}} p_{cc_+} \int_{\bR} e^{-\frac{\D_+^2}{2\sigma^2}} V_t(\D_+,c_+)\, d\D_+,\label{Qn1-fin-hor}
        }
        where,
        \al{
        V_0(\D,c) := 1. \label{v0-fin-hor}
        }
        \item[b)] There exists an optimal deterministic Markov policy $\phi\ust= (\phi_0\ust,\phi_1\ust,\ldots, \phi_T\ust)$, i.e., it chooses $u(t)$ only on the basis of $(\Delta(t),c(t))$, where for each $t=1,\ldots, T, \phi_n\ust(\D,c)$ attains the minimum in~\eqref{Vn-fin-hor} for each $(\D,c) \in \bR \times \{0,1\}$. Moreover, $V (\D,c)= V_T(\D,c)$, and $V (\D,c) =J_T(\D,c;\phi\ust)$ for each $(\D,c) \in \bR \times \{0,1\}$.
    \end{itemize}
\end{proposition}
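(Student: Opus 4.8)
The plan is to obtain Proposition~\ref{prop:valit-fin-hor} as a specialization of the finite-horizon risk-sensitive dynamic programming theory of \cite[Theorem~1, Corollary~1]{bauerle2014more} to our model, using Lemma~\ref{lemma:VI-conditions} to supply the hypotheses and then making the abstract Bellman operator explicit. Recall the mechanism behind those results: one appends to the state the cost accumulated so far, say $y$, obtaining an ordinary (risk-neutral) finite-horizon MDP on the enlarged state space whose terminal payoff is $U(y)$, and under the continuity and compactness conditions P1)--P5) recorded in Lemma~\ref{lemma:VI-conditions} this augmented MDP admits an optimal deterministic Markov policy characterized by a backward value iteration. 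First I would record the key simplification afforded by the \emph{exponential} evaluation $U(y)=e^{\g y}$: since $U(y+z)=e^{\g y}\,U(z)$, an easy backward induction shows that the value function of the augmented MDP factorizes as $e^{\g y}$ times a function of $(\D,c)$ only, so the $y$-coordinate drops out entirely and the backward recursion collapses to the \emph{multiplicative} Bellman recursion
\[
V_{t+1}(\D,c)=\min_{u\in\{0,1\}} e^{\g\, d(\D,c,u)}\;\bE[V_t(\D_+,c_+)\mid \D,c,u],\qquad V_0\equiv 1,
\]
on $\bR\times\{0,1\}$. This is the structural analogue of Lemma~\ref{lemma:u=f(del,c)}: it is precisely because the exponential is multiplicative that one need not carry the accumulated cost as part of the state, which is how challenge C2 is circumvented.

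Given this collapsed recursion, Part a) is just substitution. For $u=0$, \eqref{new-instan-cost} gives $d(\D,c,0)=\D^2$, and integrating $V_t$ against the density \eqref{df-0} yields \eqref{Qn0-fin-hor}. For $u=1$, \eqref{new-instan-cost} gives $d(\D,c,1)=\lambda+(1-c)\D^2$, which equals $\lambda$ when $c=1$ and $\lambda+\D^2$ when $c=0$; integrating $V_t$ against \eqref{df-1} and grouping its two summands according to the prefactors $c$ and $1-c$ produces \eqref{Qn1-fin-hor}. The only care needed here is bookkeeping with the channel transition probabilities $p_{cc_+}$ and the point masses $\delta_{c_+}(\cdot)$, and noting (as in \eqref{df-0}--\eqref{df-1}) that the common Gaussian normalizing constant is suppressed throughout, which is harmless since it is a positive constant independent of $u$.

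For Part b), since the action set $\{0,1\}$ is finite the minimum in \eqref{Vn-fin-hor} is attained at every $(\D,c)$ and every stage, and a measurable minimizer exists trivially: by P4)--P5) each map $(\D,c)\mapsto Q_{t+1}(\D,c;u)$ is lower semicontinuous, hence Borel measurable, so $\{(\D,c): Q_{t+1}(\D,c;1)\le Q_{t+1}(\D,c;0)\}$ is Borel and one may take $\phi_t\ust(\D,c)=\mathbf{1}\{Q_{t+1}(\D,c;1)\le Q_{t+1}(\D,c;0)\}$. By \cite[Corollary~1]{bauerle2014more} the deterministic Markov policy $\phi\ust=(\phi_0\ust,\dots,\phi_T\ust)$ so obtained is optimal for \eqref{obj-fin-hor-w/o-log_1}; identifying each iterate $V_t$ with the optimal cost-to-go over the remaining stages (a one-line induction matching the horizon index) then gives $V=V_T$ and $V(\D,c)=J_T(\D,c;\phi\ust)$.

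I expect the genuinely delicate point to be not the measurable selection (immediate for a binary action set) but the two preliminary checks: (i) verifying that P1)--P5) as stated in Lemma~\ref{lemma:VI-conditions} are exactly the hypotheses invoked by \cite[pp.~107--108, Theorem~1]{bauerle2014more}, and (ii) confirming that the factorization $U(y+z)=e^{\g y}U(z)$ really does propagate through the entire backward induction with the stated boundary term $V_0\equiv 1$ and horizon index, so that the un-augmented recursion \eqref{Vn-fin-hor}--\eqref{v0-fin-hor} is legitimate. One should also note that, as is standard for risk-sensitive problems with Gaussian noise, the iterates $V_t$ are finite-valued only when $\g$ is small enough relative to $\sigma^2$ (and $a$, $T$); if infinite-valued iterates are permitted the statement is unaffected, and otherwise this is the place a standing assumption on $\g$ would be used.
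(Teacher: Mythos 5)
Your proposal is correct and follows essentially the same route as the paper, which proves this proposition simply by citing \cite[Theorem~1, Corollary~1]{bauerle2014more} with $U(y)=e^{\gamma y}$ after verifying conditions P1)--P5) in Lemma~\ref{lemma:VI-conditions}. You supply considerably more detail than the paper does (the multiplicative collapse of the augmented recursion, the explicit substitution of \eqref{df-0}--\eqref{df-1}, the measurable selection, and the caveat about finiteness of the iterates for large $\gamma$), all of which is consistent with the paper's argument.
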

\subsection{Folding the MDP}\label{folded MDP}
We now construct a certain folded MDP~\cite{chakravorty2018sufficient} by modifying the transition density function~\eqref{df-0}-\eqref{df-1} of the original MDP~\eqref{obj-fin-hor-w/o-log_1}. The state-space of the folded MDP is $\bR_+ \times \{0,1\}$, in contrast to the original MDP that has a state space $\bR \times \{0,1\}$. This folded MDP is much simpler to analyze than the original MDP. Specifically, the error in the folded MDP assumes only non-negative values, while in the original MDP the error takes both negative and non-negative values. It is shown in Proposition~\ref{prop:equiv} that the folded MDP is equivalent to the original MDP. Thus, the value function of the folded MDP agrees with that of the original function on its state-space, and one can recover an optimal policy for the original MDP by solving the folded MDP.~Hence, it suffices to work with the folded MDP for further analysis. We now derive a key property of the value iterates, $V_t, t \in \{0,1,\ldots,T\}$ of the original MDP~\eqref{obj-fin-hor-w/o-log_1} that is instrumental in constructing the folded MDP.
\begin{proposition}\label{prop:even-fin-hor}
    The functions $V_t(\cdot,c), Q_t(\cdot,c;u),~c\in \{0,1\}, u\in \{0,1\}$, $t \in \{0,1,\ldots, T\}$ are even, i.e., 
    \nal{
    Q_t(\D,c;u)  =Q_t(|\D|,c;u), V_t(\D,c)=V_t(|\D|,c),
    }
    where $\Delta\in\bR$.
    Thus, if $\phi\ust(\cdot,c)$ is optimal, then we have,
    \nal{
    \phi_t\ust(\D,c)  =\phi_t\ust(|\D|,c).\notag
    }
\end{proposition}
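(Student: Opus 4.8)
The plan is to prove the statement by induction on the stage index $t$ of the value iteration recursion in Proposition~\ref{prop:valit-fin-hor}. The base case is immediate: $V_0(\D,c) = 1$ by~\eqref{v0-fin-hor}, which is trivially even in $\D$. For the inductive step, suppose $V_t(\cdot,c)$ is even for each $c \in \{0,1\}$; I will first show that the $Q$-functions defined in~\eqref{Qn0-fin-hor}--\eqref{Qn1-fin-hor} at stage $t+1$ are even in $\D$ for each $c$ and each $u \in \{0,1\}$, and then deduce that $V_{t+1}(\cdot,c)$ inherits evenness.

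The key computation is a change of variables $\D_+ \mapsto -\D_+$ in the Gaussian integrals appearing in~\eqref{Qn0-fin-hor}--\eqref{Qn1-fin-hor}. Writing $I_{c_+}(\D) := \int_{\bR} e^{-(\D_+ - a\D)^2/(2\sigma^2)} V_t(\D_+,c_+)\, d\D_+$, the substitution $\D_+ \mapsto -\D_+$ together with the inductive hypothesis $V_t(-\D_+,c_+) = V_t(\D_+,c_+)$ gives $I_{c_+}(-\D) = I_{c_+}(\D)$, i.e. $\D \mapsto I_{c_+}(\D)$ is even. Since the prefactors $e^{\g\D^2}$ in~\eqref{Qn0-fin-hor} and $(1-c)e^{\g(\lambda+\D^2)}$ in~\eqref{Qn1-fin-hor} are manifestly even in $\D$, while the remaining term in~\eqref{Qn1-fin-hor}, namely $c\,e^{\g\lambda}\sum_{c_+\in\{0,1\}} p_{cc_+}\int_{\bR} e^{-\D_+^2/(2\sigma^2)} V_t(\D_+,c_+)\, d\D_+$, does not depend on $\D$ at all, it follows that both $Q_{t+1}(\cdot,c;0)$ and $Q_{t+1}(\cdot,c;1)$ are even. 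Then $V_{t+1}(\D,c) = \min_{u\in\{0,1\}} Q_{t+1}(\D,c;u)$ is even, because the pointwise minimum of even functions is even. This closes the induction, and in particular $Q_t(\D,c;u) = Q_t(|\D|,c;u)$ and $V_t(\D,c) = V_t(|\D|,c)$ for all $t$ and all $\D \in \bR$.

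For the statement about the optimal policy, recall from Proposition~\ref{prop:valit-fin-hor}(b) that $\phi_t\ust(\D,c)$ may be taken to be any minimizer of $u \mapsto Q_t(\D,c;u)$. Since $Q_t(\D,c;\cdot) = Q_t(|\D|,c;\cdot)$ as functions of $u$, the set of minimizers at $\D$ coincides with the set of minimizers at $|\D|$, so one may select the optimal policy such that $\phi_t\ust(\D,c) = \phi_t\ust(|\D|,c)$, fixing if necessary a tie-breaking rule that depends on $|\D|$ only. I do not anticipate a serious obstacle here; the only points requiring a little care are the justification of the change of variables in the Gaussian integrals (finiteness of which is guaranteed by the boundedness underlying Proposition~\ref{prop:valit-fin-hor}) and the observation that the symmetry of $Q_t$ permits a symmetric selector for $\phi_t\ust$.
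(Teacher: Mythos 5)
Your proof is correct and follows essentially the same route as the paper: induction on $t$ with base case $V_0 \equiv 1$, a change of variables $\D_+ \mapsto -\D_+$ in the Gaussian integrals combined with the inductive hypothesis to show each $Q_{t+1}(\cdot,c;u)$ is even, and the observation that a pointwise minimum of even functions is even. Your remarks that the second term of $Q_{t+1}(\cdot,c;1)$ is independent of $\D$ and that the minimizer sets at $\D$ and $|\D|$ coincide are both consistent with the paper's argument.
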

\begin{proof}
        We prove this using induction. Since from~\eqref{v0-fin-hor} we have $V_0(\D,c)=1$ for $(\D,c) \in \bR \times \{0,1\}$, $V_0(\cdot,c)$ is even. This is the base case for induction. Next, assume that the iterates $V_s(\cdot,c), c \in \{0,1\}$, $s=1,2,\ldots, t$, are even. We will show that the functions $Q_{t+1}(\cdot,c; u), c \in \{0,1\}, u \in \{0,1\}$ are even. Consider the following two cases:\\
    Case i): $u=0.$ We have,
    \al{
    & Q_{t+1}(-\D,c;0) \notag\\
    & = e^{\g (-\D)^2}\sum_{c_+ \in \{0,1\}} p_{cc_+} \int_{\bR} e^{-\frac{(\D_+ +a\D)^2}{2\sigma^2}} V_t(\D_+,c_+)\, d\D_+ \notag\\
    & = e^{\g \D^2} \sum_{c_+ \in \{0,1\}} p_{cc_+} \int_{\bR} e^{-\frac{(-\D' +a\D)^2}{2\sigma^2}} V_t(-\D',c_+)\, d\D' \notag\\
    & = e^{\g \D^2} \sum_{c_+ \in \{0,1\}} p_{cc_+} \int_{\bR} e^{-\frac{(\D' -a\D)^2}{2\sigma^2}} V_t(\D',c_+)\, d\D' \notag\\
    & = Q_{t+1}(\D,c;0),\notag
    }
    where the first equality follows from~\eqref{Qn0-fin-hor}, the second equality follows from a change of variable by replacing $\D_+$ with $-\D'$, and finally the third equality follows from the induction hypothesis that $V_t(\cdot,c)$ is even. Hence, $Q_{t+1}(\cdot,c;0)$ is even.
    
    Case ii): $u=1.$ We have,
    \al{
    & Q_{t+1}(-\D,c;1) = (1-c) e^{\g(\lambda + \D^2)} \notag\\
        & \times \sum_{c_+ \in \{0,1\}} p_{cc_+} \int_{\bR} e^{-\frac{(\D_+ +a\D)^2}{2\sigma^2}} V_t(\D_+,c_+)\, d\D_+\notag\\
        & + c e^{\g \lambda} \sum_{c_+ \in \{0,1\}} p_{cc_+} \int_{\bR} e^{-\frac{\D_+^2}{2\sigma^2}} V_t(\D_+,c_+)\, d\D_+\notag\\
    & = (1-c) e^{\g(\lambda + \D^2)} \notag\\
        & \times \sum_{c_+ \in \{0,1\}} p_{cc_+} \int_{\bR} e^{-\frac{(-\D' +a\D)^2}{2\sigma^2}} V_t(-\D',c_+)\, d\D'\notag\\
        & + c e^{\g \lambda} \sum_{c_+ \in \{0,1\}} p_{cc_+} \int_{\bR} e^{-\frac{\D'^2}{2\sigma^2}} V_t(-\D,c_+)\, d\D'\notag\\
    & =Q_{t+1}(\D,c;1),\notag
    }
    where the first equality follows from~\eqref{Qn1-fin-hor}, the second equality follows from a change of variables, while the third equality follows from our induction hypothesis that $V_t(\cdot,c)$ is even and~\eqref{Qn1-fin-hor}. This shows that $Q_{t+1}(\cdot,c;1)$ is also even. Now, $V_{t+1}(\cdot,c)$ is even  since from~\eqref{Vn-fin-hor} we have that $V_{t+1}(\cdot,c)$ is pointwise minimum of two even functions $Q_t(\cdot,c;0)$ and $Q_t(\cdot,c;1)$.~Since from Proposition~\ref{prop:valit-fin-hor} b) we have that $\phi_{t+1}\ust(\cdot,c) \in \argmin_{u \in\{0,1\}} Q_{t+1}(\cdot,c;u)$, $\phi\ust_{t+1}(\cdot,c)$ is also even. The claim then follows from induction.        
\end{proof}
We next construct the folded MDP~\cite{chakravorty2018sufficient}. We use $\TD, \Tc, \Tu$ and $\Tph$ to denote the error, channel state, action, and policy, respectively for the folded MDP.
\begin{definition}[Folded MDP]
    Consider the MDP~\eqref{obj-fin-hor-w/o-log_1} that has a transition density function $p$~\eqref{df-0}-\eqref{df-1}. The associated folded MDP is a MDP with state-space $\bR_+ \times \{0,1\}$, control space $\{0,1\}$, and transition density function $\Tp$ given as follows,
    \al{\label{df-fold}
    & \Tp(\TD_+,\Tc_+ \mid \TD, \Tc; \Tu) \notag\\
    & = p(\TD_+,\Tc_+ \mid \TD, \Tc; \Tu) + p(-\TD_+,\Tc_+ \mid \TD, \Tc; \Tu),
    }
    where $\TD, \TD_+ \in \bR_+, \Tc, \Tc_+ \in \{0,1\}$, and $\Tu \in \{0,1\}$. The objective function~\eqref{obj-fin-hor-w/o-log_1} and the instantaneous cost $\Td$~\eqref{new-instan-cost} remain the same. 
\end{definition}
Define,
\al{ \psi(v):=e^{-\frac{v^2}{2\sigma^2}} , \varphi(v,s):=\psi(v-s) + \psi(v+s).\notag}
Next, we can show that the properties P1)-P5) stated in Lemma~\ref{lemma:VI-conditions} are satisfied by the folded MDP too. The proof is similar to Lemma~\ref{lemma:VI-conditions}. Hence, we can use value iteration to solve the folded MDP, and there exists an optimal deterministic Markov policy. These results follows from~\cite{bauerle2014more}, and are analogous to Proposition~\ref{prop:valit-fin-hor}, that was shown for the original MDP~\eqref{obj-fin-hor-w/o-log_1}. Let $\TV_t$ denote the iterate at stage $t$ when the value iteration algorithm is used to solve the folded MDP. Then, for $(\TD,\Tc) \in \bR_+ \times \{0,1\}$, and $t=0,1,\ldots, T-1$, we have,
\al{\label{Vn-fold}
\TV_{t+1}(\TD,\Tc) = \min_{u \in \{0,1\}} \TQ_{t+1}(\TD,\Tc;u),
}
where,
\al{
        & \TQ_{t+1}(\TD,\Tc;0) \notag\\
        & = e^{\g \TD^2} \sum_{\Tc_+ \in \{0,1\}} \int_{\bR_+} \Tp(\TD_+,\Tc_+ \mid \TD, \Tc;0) \TV_{t}(\TD_+,\Tc_+)\, d\TD_+\notag\\
        & = e^{\g \TD^2} \notag\\
        & \times \sum_{\Tc_+ \in \{0,1\}} p_{\Tc \Tc_+} \int_{\bR_+} \varphi(\TD_+,a\TD) \TV_t(\TD_+,\Tc_+)\, d\TD_+\label{Qn0-fold}\\
        & \TQ_{t+1}(\TD,\Tc;1) = (1-\Tc) e^{\g(\lambda + \TD^2)} \notag\\
        & \times \sum_{\Tc_+ \in \{0,1\}} p_{\Tc\Tc_+} \int_{\bR_+} \varphi(\TD_+,a\TD)  V_t(\TD_+,\Tc_+)\, d\TD_+\notag\\
        & + 2\Tc e^{\g \lambda} \sum_{\Tc_+ \in \{0,1\}} p_{\Tc\Tc_+} \int_{\bR_+} \psi(\TD_+) V_t(\TD_+,\Tc_+)\, d\TD_+,\label{Qn1-fold}
        }
        where,
        \al{
        \TV_0(\TD,\Tc) := 1, \label{v0-fold}
    }
and where~\eqref{Qn0-fold} and~\eqref{Qn1-fold} follow from the definition of $\Tp$~\eqref{df-fold}.

We now prove the equivalence of the folded MDP with state-space $\bR_+ \times \{0,1\}$ with the original MDP~\eqref{obj-fin-hor-w/o-log_1} with state-space $\bR \times \{0,1\}$ in the following Proposition. This allows us to use the folded MDP for subsequent analysis. We use $\Tph\ust=(\Tph_0\ust, \Tph_1\ust,\ldots,\Tph_T\ust)$ to denote an optimal deterministic Markov policy for the folded MDP.
\begin{proposition}\label{prop:equiv}
    The functions $\TQ_t, \TV_t$ corresponding to the folded MDP agree with $Q_t, V_t$~\eqref{Vn-fin-hor}-\eqref{Qn1-fin-hor} of the original MDP on $\bR_+ \times \{0,1\}$, i.e., we have the following for each $(\D,c) \in \bR \times \{0,1\}, u \in \{0,1\}, t \in \{0,1,\ldots, T\},$
    \al{
    & Q_t(\D,c;u) = \TQ_t(|\D|,c;u), V_t(\D,c)=\TV_t(|\D|,c).\notag
    }
    Thus, for any optimal policy $\Tph\ust$ (folded MDP),~$\phi\ust$ (original MDP),  
    \al{
    & \phi_t\ust(\D,c) = \Tph_t\ust(|\D|,c).\notag
    }
\end{proposition}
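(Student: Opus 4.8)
The plan is to prove this by induction on $t$, simultaneously establishing the identities for $Q_t$ and $V_t$, exactly parallel to the structure of Proposition~\ref{prop:even-fin-hor}. The base case $t=0$ is immediate since both $V_0(\D,c)=1$ and $\TV_0(\TD,\Tc)=1$ by~\eqref{v0-fin-hor} and~\eqref{v0-fold}. For the inductive step, I assume $V_t(\D,c)=\TV_t(|\D|,c)$ for all $(\D,c)\in\bR\times\{0,1\}$ and aim to show the corresponding identity for $Q_{t+1}(\cdot;u)$ with $u\in\{0,1\}$, and then deduce it for $V_{t+1}$ by taking the pointwise minimum over $u$.

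\textbf{Key steps.} First I would handle the case $u=0$. Starting from~\eqref{Qn0-fin-hor} for $Q_{t+1}(\D,c;0)$, the integral $\int_{\bR} e^{-(\D_+-a\D)^2/(2\sigma^2)} V_t(\D_+,c_+)\,d\D_+$ is split into $\int_{\bR_+}$ and $\int_{\bR_-}$; on the negative half I substitute $\D_+\mapsto-\D_+$, which turns the kernel into $e^{-(\D_++a\D)^2/(2\sigma^2)}$ and, by the inductive hypothesis together with $V_t(-\D_+,c_+)=\TV_t(\D_+,c_+)$, converts $V_t$ into $\TV_t$ evaluated on $\bR_+$. Recombining gives exactly $\int_{\bR_+}[\psi(\D_+-a\D)+\psi(\D_+ +a\D)]\TV_t(\D_+,c_+)\,d\D_+ = \int_{\bR_+}\varphi(\D_+,a\D)\TV_t(\D_+,c_+)\,d\D_+$, which matches~\eqref{Qn0-fold} once we note $e^{\g(-\D)^2}=e^{\g|\D|^2}$ and $\varphi(\D_+,a\D)=\varphi(\D_+,a|\D|)$ (since $\varphi$ is even in its second argument). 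Hence $Q_{t+1}(\D,c;0)=\TQ_{t+1}(|\D|,c;0)$. The case $u=1$ is entirely analogous: the $(1-c)$ term is treated as above, while for the $ce^{\g\lambda}$ term the kernel $e^{-\D_+^2/(2\sigma^2)}=\psi(\D_+)$ is already even, so folding the integral over $\bR$ onto $\bR_+$ simply produces the factor $2$ appearing in~\eqref{Qn1-fold}. This yields $Q_{t+1}(\D,c;1)=\TQ_{t+1}(|\D|,c;1)$. Finally, $V_{t+1}(\D,c)=\min_u Q_{t+1}(\D,c;u)=\min_u\TQ_{t+1}(|\D|,c;u)=\TV_{t+1}(|\D|,c)$, closing the induction.

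\textbf{Policy correspondence.} Once the value function identity is established for all $t$, the statement about policies follows because, by Proposition~\ref{prop:valit-fin-hor}b) and its analogue for the folded MDP, $\phi_t\ust(\D,c)\in\argmin_{u\in\{0,1\}}Q_t(\D,c;u)$ and $\Tph_t\ust(\TD,\Tc)\in\argmin_{u\in\{0,1\}}\TQ_t(\TD,\Tc;u)$; since $Q_t(\D,c;u)=\TQ_t(|\D|,c;u)$ for every $u$, the two argmin sets coincide, so one may take $\phi_t\ust(\D,c)=\Tph_t\ust(|\D|,c)$.

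\textbf{Main obstacle.} There is no deep difficulty here — the argument is a bookkeeping exercise in changes of variable — but the one point requiring care is the bridge between the inductive hypothesis as stated, $V_t(\D,c)=\TV_t(|\D|,c)$, and what is actually needed inside the folded integrals, namely $V_t(-\D_+,c_+)=\TV_t(\D_+,c_+)$ for $\D_+\in\bR_+$. This is immediate from evenness of $V_t$ (Proposition~\ref{prop:even-fin-hor}), so I would invoke that proposition explicitly at this step rather than re-deriving it; alternatively, one can note that Proposition~\ref{prop:even-fin-hor}'s induction and this one can be run together. A second minor point is ensuring the splitting $\int_\bR = \int_{\bR_+}+\int_{\bR_-}$ and the substitution are legitimate, which holds since all integrands are nonnegative and the Gaussian kernel guarantees integrability (the $V_t$ are bounded on bounded sets, and in fact the relevant finiteness is inherited from the value-iteration construction).
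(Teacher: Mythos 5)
Your proposal is correct and follows essentially the same route as the paper's proof: induction on $t$, splitting each integral over $\bR$ into $\bR_+$ and $\bR_-$, substituting $\D_+\mapsto-\D_+$ on the negative half, invoking the evenness result of Proposition~\ref{prop:even-fin-hor} together with the inductive hypothesis to recombine the kernels into $\varphi$, and then passing to $V_{t+1}$ via the pointwise minimum and to the policies via the argmin correspondence. The only cosmetic difference is that you state the inductive hypothesis directly on all of $\bR$ while the paper states it on $\bR_+$ and appeals to evenness at the end, which is an equivalent bookkeeping choice.
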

\begin{proof}
    We will prove the claim via induction. 
    Note that from ~\eqref{v0-fin-hor},~\eqref{v0-fold} we have $V_0(\D,c) = V_0(|\D|,c)= 1$ and also $\TV_0(\D,c) =1$. This is the base case for induction. Next, assume that for each $(\D,c) \in \bR_+ \times \{0,1\}, V_s(\D,c)=\TV_s(\D,c)$ for $s=1,2,\ldots,t$. We will now show that, $Q_{t+1}(\D,c;u)=\TQ_{t+1}(\D,c;u)$. For this purpose, consider the following two cases for each $(\D,c) \in \bR_+ \times \{0,1\}$:\par
    Case i): $u=0.$ We have,
    \al{
    & Q_{t+1}(\D,c;0) \notag\\
    & = e^{\g \D^2} \sum_{c_+ \in \{0,1\}} p_{cc_+} \int_{\bR} \psi(\D_+-a\D) V_t(\D_+,c_+)\, d\D_+\notag\\
    & = e^{\g \D^2} \sum_{c_+ \in \{0,1\}} p_{cc_+} \biggl[\int_{\bR_+} \psi(\D_+-a\D) V_t(\D_+,c_+)\, d\D_+ \biggr.\notag\\
    & \biggl. + \int_{\bR_-} \psi(\D_+-a\D) V_t(\D_+,c_+)\, d\D_+\biggl]\notag\\
    & = e^{\g \D^2} \sum_{c_+ \in \{0,1\}} p_{cc_+} \biggl[\int_{\bR_+} \psi(\D_+-a\D) V_t(\D_+,c_+)\, d\D_+ \biggr.\notag\\
    & \biggl. + \int_{\bR_+} \psi(-\D_+-a\D) V_t(-\D_+,c_+)\, d\D_+\biggl]\notag\\
    & = e^{\g \D^2} \sum_{c_+ \in \{0,1\}} p_{cc_+} \int_{\bR_+} \varphi(\D_+,a\D) \TV_t(\D_+,c_+)\, d\D_+\notag\\
    & = \TQ_{t+1}(\D,c;0),\label{eq:1}
    }
    where the first equality follows from~\eqref{Qn0-fin-hor}. The third equality follows from Proposition~\ref{prop:even-fin-hor}, and the induction hypothesis that $V_t(\D,c)=\TV_t(\D,c)$. Finally, the last equality follows from~\eqref{Qn0-fold}.
    
    Case ii): $u=1.$ We have,
    \al{
    & Q_{t+1}(\D,c;1) =\cQ\notag\\
    & \times \sumc p_{cc_+} \int_{\bR} \psi(\D_+-a\D) V_t(\D_+,c_+)\, d\D_+\notag\\
    & + \gl \sumc p_{cc_+} \int_{\bR} \psi(\D_+) V_t(\D_+,c_+)\, d\D_+\notag\\
    & = \cQ \sumc p_{cc_+}\notag\\
    & \times \biggl[\itg \psi(\D_+-a\D) V_t(\D_+,c_+)\, d\D_+ \biggr.\notag\\
    & \biggl. + \itg \psi(-\D_+-a\D) V_t(-\D_+,c_+)\, d\D_+ \biggr]\notag\\
    & + \gl \sumc p_{cc_+} \biggl[\itg \psi(\D_+) V_t(\D_+,c_+)\, d\D_+ \biggr.\notag\\
    & + \biggl. \itg \psi(\D_+) V_t(-\D_+,c_+)\, d\D_+ \biggr]\notag\\
    & = \TQ_{t+1}(\D,c;1),\label{eq:2}
    }
    where the first equality follows from~\eqref{Qn1-fin-hor} and the last equality follows from Proposition~\ref{prop:even-fin-hor}, our induction hypothesis that $V_t(\D,c)=\TV_t(\D,c)$, and~\eqref{Qn1-fold}.
    
    Now, upon combining~\eqref{eq:1},~\eqref{eq:2} with Proposition~\ref{prop:even-fin-hor}, we obtain $Q_{t+1}(\D,c;u)= Q_{t+1}(|\D|,c;u)=\TQ_{t+1}(|\D|,c;u)$ for each $(\D,c) \in \bR \times \{0,1\}$. Next, from~\eqref{Vn-fin-hor},~\eqref{Vn-fold} we have that $V_{t+1}, \TV_{t+1}$ is the pointwise minimum of $Q_{t+1},\TQ_{t+1}$ taken with respect to $u \in \{0,1\}$, we have that $V_{t+1}(\D,c)=\TV_{t+1}(|\D|,c)$. Since $\phi_{t+1}\ust$ 
    chooses the action that minimizes the function $Q_{t+1}(\Delta,c;\cdot)$, similarly $\Tph_{t+1}\ust$ chooses action which minimizes $\TQ_{t+1}(\D,c;\cdot)$, we have $\phi_{t+1}\ust(\D,c) = \Tph_{t+1}\ust(|\D|,c)$ for each $(\D,c) \in \bR \times \{0,1\}$. The claim then follows from induction.
\end{proof}

\section{Structural results}\label{sec:structural results}
In this section, we begin by showing some structural results for optimal policy of the folded MDP $(\bR_+ \times \{0,1\}, \{0,1\}, \Tp, \Td)$. Specifically, we first establish in Proposition~\ref{prop:monotoneV} a result on the monotonicity property of the value function iterates $\TV_t, t \in \{0,1,\ldots,T\}$. Next, we show that an optimal scheduling policy $\Tph\ust$ satisfies a certain structure. Finally, by using Proposition~\ref{prop:equiv}, we obtain similar structural results for the original MDP~\eqref{obj-fin-hor-w/o-log_1} by unfolding this MDP.
\begin{proposition}\label{prop:monotoneV}
    Consider the folded MDP $(\bR_+ \times \{0,1\}, \{0,1\}, \Tp, \Td)$. For each $\Tc \in \{0,1\}$, the iterates generated by the value iteration algorithm~\eqref{Vn-fold} $\TV_t(\cdot,\Tc), t \in 0,1,\ldots,T$ are non-decreasing (with respect to $\TD$).
\end{proposition}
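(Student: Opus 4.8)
The plan is to induct on $t$. The base case $t=0$ is immediate, since $\TV_0(\cdot,\Tc)\equiv 1$ by \eqref{v0-fold}, which is (weakly) non-decreasing. For the inductive step, assume $\TV_t(\cdot,\Tc')$ is non-decreasing on $\bR_+$ for each $\Tc'\in\{0,1\}$. I will show that $\TQ_{t+1}(\cdot,\Tc;0)$ and $\TQ_{t+1}(\cdot,\Tc;1)$ are both non-decreasing; since $\TV_{t+1}(\cdot,\Tc)=\min_{u\in\{0,1\}}\TQ_{t+1}(\cdot,\Tc;u)$ by \eqref{Vn-fold}, and the pointwise minimum of non-decreasing functions is non-decreasing, this closes the induction.

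The crux --- and the step I expect to be the main obstacle --- is a stochastic-monotonicity property of the folded-Gaussian kernel: for any non-decreasing $h:\bR_+\to\bR_+$ for which the integral is finite, the map $\TD\mapsto\int_{\bR_+}\varphi(\TD_+,a\TD)\,h(\TD_+)\,d\TD_+$ is non-decreasing on $\bR_+$. The key observation is that $\tfrac{1}{\sqrt{2\pi}\sigma}\varphi(y,\mu)=\tfrac{1}{\sqrt{2\pi}\sigma}\bigl(\psi(y-\mu)+\psi(y+\mu)\bigr)$ is exactly the density on $\bR_+$ of $|Z|$ for $Z\sim\cN(\mu,\sigma^2)$, so the integral equals $\sqrt{2\pi}\,\sigma\,\bE[h(|Z|)]$ with $\mu=a\TD$. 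It thus suffices to show $|Z|$ is stochastically non-decreasing in $|\mu|$ (its law depends on $\mu$ only through $|\mu|$, since $\psi$ is even). For $c\ge 0$, write $\bP(|Z|\le c)=\int_{-c-\mu}^{\,c-\mu}\tfrac{1}{\sqrt{2\pi}\sigma}e^{-y^2/(2\sigma^2)}\,dy$; differentiating in $\mu$ yields $\tfrac{1}{\sqrt{2\pi}\sigma}\bigl(e^{-(c+\mu)^2/(2\sigma^2)}-e^{-(c-\mu)^2/(2\sigma^2)}\bigr)\le 0$ for $\mu\ge 0$ (as $(c+\mu)^2\ge(c-\mu)^2$ there), so $\bP(|Z|>c)$ is non-decreasing in $\mu\ge 0$. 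Hence, since $|a\TD|=|a|\,\TD$ is non-decreasing in $\TD\ge 0$ and $h$ is non-decreasing, the displayed map is non-decreasing; I will apply this with $h=\TV_t(\cdot,\Tc_+)$, which is non-negative (the iterates are non-negative, by \eqref{v0-fold} and \eqref{Qn0-fold}--\eqref{Qn1-fold}) and non-decreasing by the inductive hypothesis.

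Granting this, the remaining steps are bookkeeping. For $u=0$, \eqref{Qn0-fold} expresses $\TQ_{t+1}(\TD,\Tc;0)$ as $e^{\g\TD^2}$ times $\sum_{\Tc_+}p_{\Tc\Tc_+}\int_{\bR_+}\varphi(\TD_+,a\TD)\TV_t(\TD_+,\Tc_+)\,d\TD_+$; on $\bR_+$ the first factor is non-negative and non-decreasing, while the second is non-negative (as $\varphi,\TV_t\ge 0$) and non-decreasing by the property above, so their product is non-decreasing. For $u=1$ with $\Tc=0$, \eqref{Qn1-fold} has the same form up to the extra positive constant $e^{\g\lambda}$ and the transition row $p_{0\Tc_+}$, so the same argument applies. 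For $u=1$ with $\Tc=1$, the first summand in \eqref{Qn1-fold} vanishes and the remaining term $2e^{\g\lambda}\sum_{\Tc_+}p_{1\Tc_+}\int_{\bR_+}\psi(\TD_+)\TV_t(\TD_+,\Tc_+)\,d\TD_+$ does not depend on $\TD$, hence is trivially non-decreasing. Thus $\TQ_{t+1}(\cdot,\Tc;u)$ is non-decreasing for every $\Tc,u\in\{0,1\}$, and taking the minimum over $u$ shows $\TV_{t+1}(\cdot,\Tc)$ is non-decreasing, which completes the induction.
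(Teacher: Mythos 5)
Your proof is correct and its skeleton is identical to the paper's: induction on $t$ with base case $\TV_0\equiv 1$, showing that both $\TQ_{t+1}(\cdot,\Tc;0)$ and $\TQ_{t+1}(\cdot,\Tc;1)$ are non-decreasing and then taking the pointwise minimum. The one substantive difference is how the crux is handled. The paper isolates the monotonicity of $\TD\mapsto\int_{\bR_+}\varphi(\TD_+,a\TD)\,\TV_t(\TD_+,\Tc_+)\,d\TD_+$ as Lemma~\ref{appen:lemma-inc-func} and disposes of it by citation to~\cite{10384144}; you prove it from scratch by recognizing $\varphi(\cdot,\mu)/(\sqrt{2\pi}\sigma)$ as the density of $|Z|$ with $Z\sim\cN(\mu,\sigma^2)$ and showing, by differentiating $\bP(|Z|\le c)$ in $\mu$, that $|Z|$ is stochastically non-decreasing in $|\mu|$. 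This folded-normal stochastic-dominance argument is clean, elementary, and makes the proof self-contained where the paper's is not, at the cost of a few extra lines; your explicit caveat that the argument needs the integrals to be finite (i.e., $\gamma$ small enough relative to $1/(2\sigma^2)$ given the $e^{\gamma\TD^2}$ growth of the iterates) is a point the paper itself glosses over. Your case split for $\Tu=1$ (the $\Tc=1$ term is constant in $\TD$, the $\Tc=0$ term has the same form as the $\Tu=0$ case) matches the paper's computation. No gaps.
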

\begin{proof}
    We will prove this via induction. Since $\TV_0(\TD,\Tc)=1$~\eqref{v0-fold}, the claim holds for $n=0$. Next, assume that the functions $\TV_s(\cdot,\Tc), \Tc \in \{0,1\}, s=1,2,\ldots,t$ are non-decreasing. We will now show that the functions $\TQ_{t+1}(\cdot,\Tc;\Tu), \Tc \in \{0,1\}, \Tu \in \{0,1\}$ are non-decreasing. For this purpose, consider $\TD',\TD \in \bR_+$ satisfying
    $\TD' \geq \TD$. We have the following two possibilities:

	 Case i): $\Tu=0.$ We have,
	\al{
		& \TQ_{t+1}(\TD',\Tc;0) \notag\\
		& = e^{\g \TD'^2} \sum_{\Tc_+ \in \{0,1\}} p_{\Tc \Tc_+} \itg \varphi(\TD_+,a\TD')\TV_t(\TD_+,\Tc_+)\, d\TD_+\notag\\
		& \geq \gTd \sumTc p_{\Tc\Tc_+} \itg \vph \TV_t(\TD_+,\Tc_+)\, d\TD_+\notag\\
		& = \TQ_{t+1}(\TD,\Tc;0),\label{eq:3}
	}
	where the first equality follows from~\eqref{Qn0-fold}, and the inequality follows from our induction hypothesis on $\TV_t$ and Lemma~\ref{appen:lemma-inc-func}.
 
	 Case ii): $\Tu=1.$ We have,
	\al{
		& \TQ_{t+1}(\TD',\Tc;1) = (1-\Tc) e^{\g(\lambda + \TD'^2)} \notag\\
		& \times \sum_{\Tc_+ \in \{0,1\}} p_{\Tc\Tc_+} \int_{\bR_+} \varphi(\TD_+,a\TD')  V_t(\TD_+,\Tc_+)\, d\TD_+\notag\\
		& + 2\Tc e^{\g \lambda} \sum_{\Tc_+ \in \{0,1\}} p_{\Tc\Tc_+} \int_{\bR_+} \psi(\TD_+) V_t(\TD_+,\Tc_+)\, d\TD_+\notag\\
		& \geq (1-\Tc) e^{\g(\lambda + \TD^2)} \notag\\
		& \times \sum_{\Tc_+ \in \{0,1\}} p_{\Tc\Tc_+} \int_{\bR_+} \varphi(\TD_+,a\TD)  V_t(\TD_+,\Tc_+)\, d\TD_+\notag\\
		& + 2\Tc e^{\g \lambda} \sum_{\Tc_+ \in \{0,1\}} p_{\Tc\Tc_+} \int_{\bR_+} \psi(\TD_+) V_t(\TD_+,\Tc_+)\, d\TD_+\notag\\
		& = \TQ_{t+1}(\TD,\Tc;1),\label{eq:4}
	}
	where the first equality follows from~\eqref{Qn1-fold}, and the inequality follows from induction hypothesis on $\TV_t$ and Lemma~\ref{appen:lemma-inc-func} in the appendix.
	
	Since $\TV_{t+1}$ is the pointwise minimum of $\TQ_{t+1}$ taken w.r.t. $\Tu \in \{0,1\}$, from~\eqref{eq:3} and~\eqref{eq:4} we have that $\TV_{t+1}(\cdot,\Tc)$ is non-decreasing.~The proof then follows from induction.
\end{proof}    
We now introduce the class of threshold-type policies for the folded MDP, and for the original MDP. We will then show that an optimal scheduling policy for the folded MDP belongs to this class.
\begin{definition}[Threshold-type Policy]
    Let the channel state and error at time $t \in \{0,1,\ldots,T\}$ for the folded MDP be $\Tc$ and $\TD$ respectively. We say that a scheduling policy $\Tph$ for the folded MDP is of threshold-type if for each $t \in \{0,1,\ldots,T\}$ and $\Tc \in \{0,1\}$ there exists a threshold $\TD\ust_t(\Tc)$ such that it attempts packet transmission at time $t$ only when $\TD \geq \TD\ust_t(\Tc)$. 
    
    Similarly, a scheduling policy $\phi$ of the original MDP is of threshold-type if for each $c \in \{0,1\},$ there exists a threshold $\D\ust_t(c)$ such that a transmission attempt at time $t$ occurs only when the error $\Delta$ exceeds the corresponding threshold, i.e. when $|\D| \geq \D\ust_t(c)$.
\end{definition}
The following theorem shows that the optimal scheduling policy for the folded MDP exhibits a threshold structure.
\begin{theorem}\label{main-theorem}
    Let $\Tc$ and $\TD$ be the channel state and error at time $t \in \{0,1\ldots,T\}$ respectively. Then, for each $t$ and $ \Tc \in \{0,1\}$, there exists a threshold $\TD\ust_t(\Tc)$ such that it is optimal to transmit at time $t$ only when $\TD \geq \TD\ust_t(\Tc)$. Thus, there exists an optimal scheduling policy that admits a threshold structure.
\end{theorem}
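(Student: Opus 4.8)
The plan is to read the optimal action off the value iteration recursion~\eqref{Vn-fold} and show that, for each fixed channel state, the set of errors at which transmission is optimal is a half-line $[\TD\ust_t(\Tc),\infty)$. By the folded--MDP analogue of Proposition~\ref{prop:valit-fin-hor}, at each time $t$ there is an optimal deterministic Markov policy with $\Tph\ust_t(\TD,\Tc)\in\argmin_{\Tu\in\{0,1\}}\TQ_{t+1}(\TD,\Tc;\Tu)$; in particular, choosing $\Tu=1$ whenever $\TQ_{t+1}(\TD,\Tc;1)\le\TQ_{t+1}(\TD,\Tc;0)$ and $\Tu=0$ otherwise is a Borel measurable optimal selector. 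Hence it suffices to prove, for each $\Tc\in\{0,1\}$, that $S_t(\Tc):=\{\TD\in\bR_+:\TQ_{t+1}(\TD,\Tc;1)\le\TQ_{t+1}(\TD,\Tc;0)\}$ is either empty, all of $\bR_+$, or a half-line $[\TD\ust_t(\Tc),\infty)$ (the properties used below hold for every iterate, so the particular index $t+1$ is immaterial).

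First I dispose of the bad--channel state $\Tc=0$, which is immediate: a transmission attempt is then wasted, and comparing~\eqref{Qn0-fold} with~\eqref{Qn1-fold} --- whose first summand carries the factor $1-\Tc$ and whose second carries $\Tc$ --- shows $\TQ_{t+1}(\TD,0;1)=e^{\g\lambda}\,\TQ_{t+1}(\TD,0;0)$. Since $\g,\lambda>0$ and $\TQ_{t+1}(\cdot,0;0)>0$ (because $\TV_t\ge 1$), we get $\TQ_{t+1}(\TD,0;1)>\TQ_{t+1}(\TD,0;0)$ for every $\TD$, so transmission is never optimal in the bad state and we take $\TD\ust_t(0)=+\infty$, i.e.\ $S_t(0)=\emptyset$.

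The crux is the good--channel state $\Tc=1$, and it rests on two observations. The factor $1-\Tc$ annihilates the first summand of~\eqref{Qn1-fold}, so
\al{
\TQ_{t+1}(\TD,1;1)=2e^{\g\lambda}\sumTc p_{1\Tc_+}\itg \pD\,\TV_t(\TD_+,\Tc_+)\,d\TD_+=:K_t,\notag
}
a constant independent of $\TD$ --- a successful transmission resets the error, so the continuation cost cannot depend on the current error. On the other hand, $\TQ_{t+1}(\cdot,1;0)$ is non-decreasing in $\TD$: this is precisely the estimate carried out in the proof of Proposition~\ref{prop:monotoneV} (case $\Tu=0$, cf.~\eqref{eq:3}), combining monotonicity of $\TD\mapsto e^{\g\TD^2}$ on $\bR_+$, the conclusion of Proposition~\ref{prop:monotoneV} that $\TV_t(\cdot,\Tc_+)$ is non-decreasing, and Lemma~\ref{appen:lemma-inc-func} on monotonicity of $\TD\mapsto\itg \vph\,f(\TD_+)\,d\TD_+$ for non-decreasing $f$. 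Consequently $\TD\mapsto\TQ_{t+1}(\TD,1;0)-K_t$ is non-decreasing, hence negative for small $\TD$ and non-negative thereafter, changing sign at most once. Setting $\TD\ust_t(1):=\inf\{\TD\ge 0:\TQ_{t+1}(\TD,1;0)\ge K_t\}$ (with $\inf\emptyset:=+\infty$) yields $S_t(1)=[\TD\ust_t(1),\infty)$, so it is optimal to transmit at time $t$ in the good state exactly when $\TD\ge\TD\ust_t(1)$. Using the selector of the first paragraph with these thresholds produces an optimal policy of threshold type, which is the assertion of the theorem (and Proposition~\ref{prop:equiv} then transfers the structure back to the original MDP, as the paper does next).

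I do not expect a genuine obstacle here: the one non-elementary ingredient, monotonicity of $\TQ_{t+1}(\cdot,1;0)$, is inherited verbatim from Proposition~\ref{prop:monotoneV} and Lemma~\ref{appen:lemma-inc-func}, and everything else is bookkeeping with the value iteration, the sign of $e^{\g\lambda}-1$, and the flatness in $\TD$ of the ``transmit'' value in the good state. The only points needing minor care are the tie--breaking at $\TD=\TD\ust_t(1)$ (break ties toward transmission, which is optimal there by continuity of $\TQ_{t+1}(\cdot,1;0)$) and the degenerate half-lines $\TD\ust_t(1)=+\infty$ (never transmit) or $\TD\ust_t(1)=0$ (always transmit in the good state), neither of which affects the argument.
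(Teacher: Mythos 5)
Your proposal is correct and follows essentially the same route as the paper: in the bad state the transmit value strictly dominates (you note the exact factor $e^{\g\lambda}$, the paper just uses the inequality), and in the good state the transmit value is constant in $\TD$ while the no-transmit value is non-decreasing by Proposition~\ref{prop:monotoneV} and Lemma~\ref{appen:lemma-inc-func}, so the sign of their difference changes at most once. The paper phrases this last step as ``if transmitting is weakly better at $\TD$ then it remains so at $\TD'\ge\TD$'' rather than via an explicit infimum, but the content is identical.
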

\begin{proof}
    We will first show that for each time $t \in \{0,1,\ldots, T\}$, it is optimal to not transmit when the channel state is bad $(\Tc(t) = 0)$. Hence, scheduler only has to choose between the actions $0$ and $1$ when channel is good, i.e. when $\Tc(t) = 1$. For this purpose, consider the following two cases:
    
    Case i): $\Tc =0.$ We have,
    \al{
    & \TQ_{t}(\TD,0;1) \notag\\
    & = e^{\g (\lambda + \TD^2)} \sumTc p_{\Tc \Tc_+} \itg \vph V_{t-1}(\Td_+,\Tc_+)\notag\\
    & \geq \gd \sumTc p_{\Tc \Tc_+} \itg \vph V_{t-1}(\Td_+,\Tc_+)\notag\\
    & = \TQ_{t}(\TD,0;0),\notag
    }
    where the first equality follows from~\eqref{Qn1-fold}, and the inequality follows since $\g, \lambda > 0$. Hence, $\Tph_t\ust(\TD,0)=0$ for each 
    $t$. Since this is a trivial threshold policy, the claim holds for $\Tc=0$.\par
    Case ii): $\Tc=1.$ In this case, showing threshold structure is equivalent to showing that if $\TQ_t(\TD,1;1) \leq \TQ_t(\TD,1;0)$, then $\TQ_t(\TD',1;1) \leq \TQ_t(\TD',1;0)$ for $\TD' \geq \TD$. So consider,
    \al{
    & \TQ_t(\TD',1;1) - \TQ_t(\TD',1;0)\notag\\
    & = 2\Tc e^{\g \lambda} \sum_{\Tc_+ \in \{0,1\}} p_{\Tc\Tc_+} \int_{\bR_+} \psi(\TD_+) V_t(\TD_+,\Tc_+)\, d\TD_+\notag\\
    & - e^{\g \TD'^2} \sum_{\Tc_+ \in \{0,1\}} p_{\Tc \Tc_+} \itg \varphi(\TD_+,a\TD')\TV_t(\TD_+,\Tc_+)\, d\TD_+\notag\\
    & \leq 2\Tc e^{\g \lambda} \sum_{\Tc_+ \in \{0,1\}} p_{\Tc\Tc_+} \int_{\bR_+} \psi(\TD_+) V_t(\TD_+,\Tc_+)\, d\TD_+\notag\\
    & - e^{\g \TD^2} \sum_{\Tc_+ \in \{0,1\}} p_{\Tc \Tc_+} \itg \varphi(\TD_+,a\TD)\TV_t(\TD_+,\Tc_+)\, d\TD_+\notag\\
    & = \TQ_t(\TD,1;1) - \TQ_t(\TD,1;0)\notag\\
    & \leq 0,\notag
    }
    where the inequality follows from Proposition~\ref{prop:monotoneV} and Lemma~\ref{appen:lemma-inc-func} in the appendix. This completes the proof.
\end{proof}
We will now unfold the folded MDP $(\bR_+ \times \{0,1\}, \{0,1\}, \Tp, \Td)$ to get the original MDP~\eqref{obj-fin-hor-w/o-log_1}. As is shown next, this gives us a structural result for an optimal policy of the original MDP.
\begin{corollary}
     There exists an optimal scheduling policy $\phi\ust$ for the original MDP~\eqref{obj-fin-hor-w/o-log_1}, that exhibits a threshold structure.
\end{corollary}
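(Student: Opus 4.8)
The plan is to transfer the threshold structure established for the folded MDP in Theorem~\ref{main-theorem} back to the original MDP by invoking the equivalence of the two MDPs proved in Proposition~\ref{prop:equiv}; essentially no new analysis is required, only a careful matching of definitions.

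First I would invoke Theorem~\ref{main-theorem} to obtain an optimal deterministic Markov policy $\Tph\ust = (\Tph_0\ust,\ldots,\Tph_T\ust)$ for the folded MDP together with, for each $t \in \{0,1,\ldots,T\}$ and each $\Tc \in \{0,1\}$, a threshold $\TD\ust_t(\Tc) \in \bR_+ \cup \{+\infty\}$ such that $\Tph_t\ust(\TD,\Tc) = 1$ if and only if $\TD \geq \TD\ust_t(\Tc)$. For $\Tc = 0$ this threshold is the trivial one, $\TD\ust_t(0) = +\infty$ (never transmit), as shown in Case i) of that proof. Next I would apply Proposition~\ref{prop:equiv}: the policy $\phi\ust$ defined on $\bR \times \{0,1\}$ by $\phi_t\ust(\D,c) := \Tph_t\ust(|\D|,c)$ is an optimal policy for the original MDP~\eqref{obj-fin-hor-w/o-log_1}, since that proposition shows $V_t(\D,c) = \TV_t(|\D|,c)$ and that the folded and original problems have the same objective and instantaneous cost, so optimality is preserved under the identification $\D \mapsto |\D|$.

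Finally I would set $\D\ust_t(c) := \TD\ust_t(c)$ for each $t$ and each $c \in \{0,1\}$. Then by construction $\phi_t\ust(\D,c) = 1$ if and only if $\Tph_t\ust(|\D|,c) = 1$ if and only if $|\D| \geq \TD\ust_t(c) = \D\ust_t(c)$, which is exactly the threshold structure for the original MDP in the sense of the Threshold-type Policy definition. This establishes the corollary.

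The only point requiring any care is conceptual rather than technical: the threshold for the original MDP must be stated in terms of the magnitude $|\D|$ rather than $\D$ itself. This is forced by the evenness of the value iterates (Proposition~\ref{prop:even-fin-hor}) — a transmit/no-transmit decision cannot depend on the sign of $\D$ — and it is precisely how a threshold-type policy for the original MDP was defined, so no further argument is needed. I do not anticipate a genuine obstacle here; all the substantive work was done in Proposition~\ref{prop:even-fin-hor}, Proposition~\ref{prop:equiv}, Proposition~\ref{prop:monotoneV}, and Theorem~\ref{main-theorem}.
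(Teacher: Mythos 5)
Your proposal is correct and follows essentially the same route as the paper: invoke Theorem~\ref{main-theorem} for the threshold structure of the folded MDP and transfer it to the original MDP via the identification $\phi_t\ust(\D,c)=\Tph_t\ust(|\D|,c)$ from Proposition~\ref{prop:equiv}, with the monotonicity of $V_t$ in $|\D|$ (the paper's Lemma~\ref{lemma:appen-incV}, which you recover from Propositions~\ref{prop:even-fin-hor}, \ref{prop:equiv}, and \ref{prop:monotoneV}) guaranteeing that the transmit region is indeed an upper level set of $|\D|$. Your write-up is in fact more explicit than the paper's one-line proof, but the underlying argument is identical.
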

\begin{proof}
    The result follows from Lemma~\ref{lemma:appen-incV} in the appendix, Proposition~\ref{prop:equiv}, and Theorem~\ref{main-theorem}.
\end{proof}

\section{Conclusion}
In this work, we consider a remote state estimation setup in which a sensor observes an AR Markov process and has to dynamically decide whether or not to transmit an update to the remote estimator via an unreliable wireless channel that is modeled as a Gilbert-Elliott channel. The objective is to minimize a risk-sensitive cost criterion which is the expected value of the exponential of the cumulative costs incurred over a finite time horizon. The instantaneous costs are the sum of the power consumption, and estimation error. Due to the consideration of risk sensitive objective, the procedure also penalizes higher-order moments of the cumulative cost in addition to its mean. We formulate this optimization problem as a MDP. Since the original MDP MDP was difficult to aanalyze, to facilitate the analysis, we constructed a folded MDP and showed that it is equivalent to the original MDP. Subsequently, we developed an optimal policy for the folded MDP and showed that it has a threshold structure, i.e., the sensor transmits a packet only when the current error exceeds a certain threshold. Upon unfolding this MDP, we obtained similar structural results for the original problem. This work can be extended in several interesting directions. Firstly, we would like to jointly optimize over the choice of estimator and scheduler.
Secondly, we aim to extend these results to an infinite horizon setup. Moreover, since the state space is infinite, this renders the use of value iteration algorithm impractical. We would like to develop a computationally efficient algorithm that approximates the optimal policy well. For the infinite horizon setup, we would also like to develop stationary policies that are optimal.~Finally, we assumed that the system parameter and channel parameters are known. Since this knowledge is difficult to obtain in practice, we would like to derive an efficient learning algorithm that would learn a jointly optimal scheduler and estimator.
\appendix

\section{Preliminary Lemma}\label{lemma:inc-func}
\renewcommand{\thesection}{\Alph{section}}
For ease of reference, we restate the notation here:
\al{ \psi(v):=e^{-\frac{v^2}{2\sigma^2}} , \varphi(v,s):=\psi(v-s) + \psi(v+s).\notag}
\begin{lemma}\label{lemma:appen-incV}
    Consider the original MDP~\eqref{obj-fin-hor-w/o-log_1}. For each $c \in \{0,1\}$, the value iterates $V_t(\D,c)$~\eqref{Vn-fin-hor}, $t \in \{0,1,\ldots,T\}$ are non-decreasing in $|\D|.$
\end{lemma}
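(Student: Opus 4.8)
The plan is to leverage Proposition~\ref{prop:equiv} together with the monotonicity already proved for the folded MDP in Proposition~\ref{prop:monotoneV}, rather than re-running an induction directly on the original iterates. Proposition~\ref{prop:equiv} gives $V_t(\Delta,c) = \TV_t(|\Delta|,c)$ for every $(\Delta,c)\in\bR\times\{0,1\}$ and every $t\in\{0,1,\ldots,T\}$, where $\TV_t$ denotes the stage-$t$ value iterate of the folded MDP. Proposition~\ref{prop:monotoneV} states that for each $\Tc\in\{0,1\}$ the map $\TD\mapsto\TV_t(\TD,\Tc)$ is non-decreasing on $\bR_+$. Composing these two facts immediately yields that $|\Delta|\mapsto V_t(\Delta,c)$ is non-decreasing: if $|\Delta'|\geq|\Delta|$ then $V_t(\Delta',c)=\TV_t(|\Delta'|,c)\geq\TV_t(|\Delta|,c)=V_t(\Delta,c)$.

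Concretely, the steps I would carry out are: first, invoke Proposition~\ref{prop:equiv} to rewrite each $V_t(\cdot,c)$ as $\TV_t(|\cdot|,c)$; second, invoke Proposition~\ref{prop:monotoneV} to get monotonicity of $\TV_t(\cdot,\Tc)$ in its first argument on $\bR_+$; third, take two states $\Delta,\Delta'$ with $|\Delta'|\geq|\Delta|$ and chain the two relations to conclude $V_t(\Delta',c)\geq V_t(\Delta,c)$; fourth, note this holds for every $t\in\{0,1,\ldots,T\}$ and every $c\in\{0,1\}$, which is exactly the claim. No new induction is needed, since both ingredients are already established in the body of the paper.

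I do not anticipate a substantive obstacle here — the lemma is essentially a corollary of two results proved earlier, and the only thing to be careful about is the bookkeeping of where each function is defined (the folded iterates live on $\bR_+\times\{0,1\}$, the original ones on $\bR\times\{0,1\}$, and Proposition~\ref{prop:equiv} is precisely the bridge between them). If one instead wanted a self-contained argument, the mild technical point would be that the base case $V_0\equiv 1$ is trivially (weakly) non-decreasing and that in the inductive step one must appeal to Proposition~\ref{prop:even-fin-hor} to reduce statements about $V_t$ on all of $\bR$ to statements on $\bR_+$ before applying Lemma~\ref{appen:lemma-inc-func}; but going through Proposition~\ref{prop:equiv} sidesteps this entirely.
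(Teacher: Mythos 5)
Your proof is correct and follows essentially the same route as the paper: both combine Proposition~\ref{prop:equiv} (which already encodes the evenness of $V_t$ via $V_t(\D,c)=\TV_t(|\D|,c)$) with the monotonicity of $\TV_t(\cdot,\Tc)$ from Proposition~\ref{prop:monotoneV}. The paper additionally cites Proposition~\ref{prop:even-fin-hor} explicitly for the reduction to $|\D|$, a bookkeeping point you correctly identify as subsumed by Proposition~\ref{prop:equiv}.
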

\begin{proof}
    From Proposition~\ref{prop:equiv} we have that, $\D \in \bR_+, V_t(\D,c)=\TV_t(\D,c),$ . The result then follows from Proposition~\ref{prop:even-fin-hor} since for $\D \in \bR, V_t(\D,c)=V_t(|\D|,c)=\TV_t(|\D|,c)$ and $\TV_t(|\D|,c)$ in non-decreasing in $|\D|$ by Proposition~\ref{prop:monotoneV}.
\end{proof}
\begin{lemma}\label{appen:lemma-inc-func}
    Consider the folded MDP $(\bR_+ \times \{0,1\}, \{0,1\}, \Tp,\Td)$. Let $\TD' \geq \TD$ where $\TD',\TD \in \bR_+$. Assume for each $\Tc \in \{0,1\}, \TV_t(\cdot,\Tc), t \in \{0,1,\ldots,T\}$ is non-decreasing. Then, $\TV_t$ satisfies the following,
    \al{
    & \itg \varphi(\TD_+,a\TD')\TV_t(\TD_+,\Tc_+)\, d\TD_+ \notag\\
    & \geq \itg \varphi(\TD_+,a\TD)\TV_t(\TD_+,\Tc_+)\, d\TD_+\notag
    }
\end{lemma}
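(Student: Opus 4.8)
The plan is to reduce the claimed inequality to a one-dimensional monotonicity statement for Gaussians and then exploit a single-crossing property. First I would note that $\psi$ is even, so $\varphi(v,s)=\varphi(v,|s|)$ for every $s\in\bR$, and that $0\le \TD\le\TD'$ forces $0\le |a|\TD\le |a|\TD'$; hence it suffices to show that for every non-decreasing $g$ on $\bR_+$ (we take $g=\TV_t(\cdot,\Tc_+)$, which is non-decreasing by hypothesis, and for which $\itg \varphi(\TD_+,s) g(\TD_+)\,d\TD_+<\infty$ since the value iterates $\TV_t$ are finite-valued) the function
\al{
h(s):=\itg \varphi(\TD_+,s)\,g(\TD_+)\,d\TD_+\notag
}
is non-decreasing in $s$ on $[0,\infty)$; applying this with $s=|a|\TD$ and $s'=|a|\TD'$ then yields the lemma. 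Intuitively this holds because $\tfrac{1}{\sqrt{2\pi}\sigma}\varphi(\cdot,s)$ is the density on $\bR_+$ of $|Z|$ with $Z\sim\cN(s,\sigma^2)$, and $|Z|$ is stochastically non-decreasing in $s\ge0$; I will make this precise by a direct comparison.

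Next I would record the elementary identity obtained by expanding the squares in $\psi(\TD_+-s)+\psi(\TD_++s)$,
\al{
\varphi(\TD_+,s)=2\,\psi(\TD_+)\,e^{-s^2/(2\sigma^2)}\cosh\!\bl(\TD_+ s/\sigma^2\br),\notag
}
which for $0\le s\le s'$ gives $\varphi(\TD_+,s')/\varphi(\TD_+,s)=e^{(s^2-s'^2)/(2\sigma^2)}\cosh(\TD_+ s'/\sigma^2)/\cosh(\TD_+ s/\sigma^2)$. Since $\frac{d}{d\TD_+}\bl[\log\cosh(\alpha\TD_+)-\log\cosh(\beta\TD_+)\br]=\alpha\tanh(\alpha\TD_+)-\beta\tanh(\beta\TD_+)\ge0$ whenever $\alpha\ge\beta\ge0$ and $\TD_+\ge0$, the ratio $\varphi(\TD_+,s')/\varphi(\TD_+,s)$ is non-decreasing in $\TD_+\ge0$. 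Writing $D(\TD_+):=\varphi(\TD_+,s')-\varphi(\TD_+,s)=\varphi(\TD_+,s)\bl(\varphi(\TD_+,s')/\varphi(\TD_+,s)-1\br)$, this means $D$ changes sign on $\bR_+$ at most once, and from $\le 0$ to $\ge 0$: there is $\TD_0\in[0,\infty]$ with $D\le0$ on $[0,\TD_0)$ and $D\ge0$ on $[\TD_0,\infty)$. A standard Gaussian computation also gives $\itg\varphi(\TD_+,s)\,d\TD_+=\sqrt{2\pi}\sigma$ for all $s\ge0$, hence $\itg D(\TD_+)\,d\TD_+=0$.

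Finally I would conclude with the single-crossing comparison: since $g$ is non-decreasing, $g(\TD_+)\le g(\TD_0)$ on $[0,\TD_0)$ where $D\le0$, while $g(\TD_+)\ge g(\TD_0)$ on $[\TD_0,\infty)$ where $D\ge0$, so $D(\TD_+)g(\TD_+)\ge D(\TD_+)g(\TD_0)$ on all of $\bR_+$; integrating,
\al{
h(s')-h(s)=\itg D(\TD_+)\,g(\TD_+)\,d\TD_+\;\ge\; g(\TD_0)\itg D(\TD_+)\,d\TD_+=0,\notag
}
with the degenerate cases $\TD_0\in\{0,\infty\}$ forcing $D\equiv0$ (by $\itg D=0$) and hence $h(s')=h(s)$. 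The main obstacle is the single-crossing step — verifying that $\varphi(\cdot,s')-\varphi(\cdot,s)$ has exactly one sign change, in the correct order — which is precisely what the monotonicity of the $\cosh$-ratio delivers; combined with the mass-preservation identity $\itg\varphi(\TD_+,s)\,d\TD_+=\sqrt{2\pi}\sigma$, the remaining steps are routine.
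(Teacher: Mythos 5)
Your proof is correct, and it is genuinely different from what the paper does: the paper does not prove this lemma at all, but instead defers to an external reference (\cite{10384144}) with a substitution of symbols. Your argument is self-contained, which is a real gain here since the lemma is the workhorse behind both the monotonicity of the value iterates and the threshold structure. Substantively, what you prove via the $\cosh$ identity, the monotone likelihood-ratio of $\varphi(\cdot,s')/\varphi(\cdot,s)$, the single crossing of $D=\varphi(\cdot,s')-\varphi(\cdot,s)$, and the mass-preservation identity $\itg\varphi(\TD_+,s)\,d\TD_+=\sqrt{2\pi}\sigma$ is exactly first-order stochastic dominance of $|Z_s|$, $Z_s\sim\cN(s,\sigma^2)$, in $s\ge 0$ — which is the standard route such references take, usually by directly showing the CDF $\Phi((x-s)/\sigma)-\Phi((-x-s)/\sigma)$ is non-increasing in $s$ (a one-line comparison of $\psi(x-s)$ with $\psi(x+s)$). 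Your likelihood-ratio/single-crossing version is slightly longer but proves the stronger MLR ordering, and it correctly handles two points the paper's statement glosses over: the case $a<0$ (via evenness of $\varphi$ in $s$) and the degenerate crossings $\TD_0\in\{0,\infty\}$. The only caveat worth flagging is the integrability of $\itg\varphi(\TD_+,s)\TV_t(\TD_+,\Tc_+)\,d\TD_+$, which you invoke but which really rests on the (implicit, paper-wide) assumption that the value-iteration integrals are finite for the given $\gamma,\sigma,T$; since the inequality holds trivially in the extended reals when both sides are infinite, this does not affect the conclusion.
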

\begin{proof}
    The proof follows from~\cite{10384144} with $\varphi, \TD',\TD, \TD_+,\cT(\Tilde{b})$ replaced by $\psi, \Te',\Te,\Te_+,\Tc_+$ respectively.
\end{proof}

\bibliographystyle{IEEEtran}
\bibliography{refs}

\end{document}